\theoremstyle{plain}
\newtheorem{theorem}[equation]{Theorem}
\newtheorem{proposition}[equation]{Proposition}
\newtheorem{corollary}[equation]{Corollary}
\newtheorem{definition}[equation]{Definition}
\theoremstyle{remark}
\newtheorem{remark}{Remark}
\numberwithin{equation}{section}
\newcommand{\wB}{\mathbf{B}_{\lambda}}
\newcommand{\Bmu}{\mathbf{B}_{\mu}}
\newcommand{\BOmmu}{\mathbf{B}_{\Omega,\mu}}
\newcommand{\Bom}{\mathbf{B}_{\omega}}
\newcommand{\uplane}{\mathbb{R}^2_+}
\newcommand{\Aptt}{A_p^+(\mathbb{R}^2_+)}
\newcommand{\BOm}{\mathbf{B}_{\Omega}}
\long\def\symbolfootnote[#1]#2{\begingroup%
\def\thefootnote{\fnsymbol{footnote}}\footnote[#1]{#2}\endgroup}
\begin{document}
\author{Yunus E. Zeytuncu}
\title[$L^p$ Regularity of Weighted Bergman Projections]{$L^p$ Regularity of Weighted Bergman Projections}
\keywords{Bergman projection, irregularity, Forelli-Rudin formula}
\subjclass[2010]{Primary: 32A25, 32A36; Secondary: 32A30}
\address{Department of Mathematics, Texas A\&M University, College Station, TX 77843}
\email{zeytuncu@math.tamu.edu}

\begin{abstract} 
We investigate $L^p$ regularity of weighted Bergman projections on the unit disc and $L^p$ regularity of ordinary Bergman projections in higher dimensions.
\end{abstract}

\maketitle

\section{Introduction}

\subsection{Setup and Problems}
Let $\Omega$ be a domain in $\mathbb{C}^{n}$ and $\mu(z)$ be a non-negative measurable function on $\Omega$.
Let $L^2(\Omega,\mu)$ denote the space of square-integrable functions on $\Omega$ with respect to the measure $\mu(z)dA(z)$ where $dA(z)$ is the ordinary Lebesgue measure. 
We call $\mu(z)$ a \textit{weight} on $\Omega$ and $L^2(\Omega, \mu)$ \textit{the weighted $L^2$} space of $\Omega$.
$L^2(\Omega, \mu)$ is a Hilbert space with the inner product:
\begin{equation*}
\left<f,g\right>_{\mu}=\int_{\Omega}f(z)\overline{g(z)}\mu(z)dA(z),
\end{equation*}
and the norm:
\begin{equation*}
||f||_{2,\mu}^2=\int_{\Omega}\left|f(z)\right|^2\mu(z)dA(z).
\end{equation*}

\noindent Let $L^2_a(\Omega, \mu)$ denote the subspace of holomorphic functions in $L^2(\Omega, \mu)$. 
This subspace may be trivial or finite dimensional depending on the weight $\mu$. 
In such a case the main problem of this paper becomes trivial. It will be clear from the context that 
$L^2_a(\Omega, \mu)$ will be always infinite dimensional for all weights considered in this paper. 

\begin{definition} 
A weight $\mu$ is said to be an \textit{admissible weight on} $\Omega$ if   
for any compact subset $K$ of $\Omega$, there exists $C_K>0$ such that
\begin{equation*}
 \sup_{z\in K}|f(z)|\leq C_K||f||_{2,\mu}
\end{equation*}
for all f $\in L^2_a(\Omega, \mu)$.
\end{definition}
For instance if $\mu$ is continuous and never vanishes inside $\Omega$ (it can still vanish on the boundary) 
then it satisfies the inequality above and therefore it is admissible. It is easy to see that if $\mu$ is admissible 
then $L^2_a(\Omega, \mu)$ is a closed subspace of $L^2(\Omega,\mu)$ and all point evaluation maps are continuous. 
See \cite{Pasternak90} for this definition and some sufficient conditions. In this note, all weights are admissible.

When $L^2_a(\Omega, \mu)$ is a closed subspace of $L^2(\Omega, \mu)$ there exists the orthogonal projection operator that we call \textit{the weighted Bergman projection}:
\begin{equation*}\label{projection}
\mathbf{B}_{\Omega, \mu}: L^2(\Omega, \mu) \to L^2_a(\Omega, \mu).
\end{equation*}
This projection is an integral operator with the kernel, called \textit{the weighted Bergman kernel}, denoted by $B_{\Omega, \mu}(z,w)$:
\begin{equation*}\label{integraloperator}
\mathbf{B}_{\Omega, \mu}f(z)=\int_{\Omega} B_{\Omega, \mu}(z,w)f(w)\mu(w)dA(w).
\end{equation*}

When $\mu(z)\equiv 1$, we call the weighted projection \textit{the ordinary Bergman projection of} $\Omega$. We denote the space of weighted $p-$integrable functions by $L^p(\Omega, \mu)$ for $p\in[1,\infty)$ and the weighted $L^p$ norm by $||.||_{p,\mu}$.\\

The Bergman projection $\BOmmu$ is a canonical object on the weighted space $(\Omega,\mu)$ and it is a fundamental question how perturbations of the domain $\Omega$ or the weight $\mu$ change the analytic properties of this canonical object. In this note, we are particularly interested in the following problem.\\

\begin{enumerate}
\item[] \textbf{$L^p$ Regularity Problem.} For a given domain $\Omega$ and a weight $\mu$ on $\Omega$, determine values of $p\in(1,\infty)$ such that the weighted Bergman projection $\BOmmu$ is bounded from $L^p(\Omega,\mu)$ to itself.\\ 
\end{enumerate}

Note that, by duality and self-adjointness, if $\BOmmu$ is bounded on $L^{p_0}(\Omega, \mu)$ for some $p_0>2$ then it is also bounded on $L^{q_0}(\Omega, \mu)$ where $\frac{1}{p_0}+\frac{1}{q_0}=1.$ Further, by interpolation, $\BOmmu$ is also bounded on $L^s(\Omega, \mu)$ for any $q_0\leq s \leq p_0$.

\subsection{Background} 
This problem is investigated in various forms in the literature. We mention a few results that motivate our work in this note.\\

For $\Omega=\mathbb{D}$ the unit disc in $\mathbb{C}^1$ and radial weights $\mu(z)=(1-|z|^2)^t$ for $t>-1$, the corresponding weighted Bergman projections are bounded on $L^p\left(\mathbb{D}, (1-|z|^2)^t\right)$ for any $p\in(1,\infty)$. This can be proven either by Schur's lemma (see \cite{ForelliRudin} or \cite{Zhubook}) or by singular integral theory (see \cite{McNeal94}). The same conclusion is also true for weights that are comparable to the weights above, see \cite{ZeytuncuComparabletoone} and \cite{ZeytuncuComparable}. On the other hand, in \cite{Dostanic04} there are examples of radial weights $\mu$ on $\mathbb{D}$ such that the weighted projections are bounded on $L^p(\mathbb{D},\mu)$ only if $p=2$.\\

In higher dimensions, \cite{PhongStein}, \cite{McNeal94}, \cite{McNSte94} and \cite{Charp06} contain some basic $L^p$ regularity results in the unweighted case. In these articles, it is shown that if $\Omega$ is a strongly pseudoconvex domain or a smoothly bounded convex domain of finite type in $\mathbb{C}^n$ or a smoothly bounded pseudoconvex domain of finite type in $\mathbb{C}^2$ or a decoupled domain in $\mathbb{C}^n$ then the ordinary   Bergman projection  $\mathbf{B}_{\Omega}$  is bounded from $L^p(\Omega)$ to $L^p(\Omega)$ for any $p\in(1,\infty).$ 
As for $L^p$ irregularity results in higher dimensions, \cite{Barrett84} and \cite{KrantzPeloso07Statements} contain the main examples. In \cite{Barrett84}, Barrett gives an example of a smoothly bounded non-pseudoconvex domain $D$ in $\mathbb{C}^2$ such that the ordinary projection $\textbf{B}_{D}$ is not bounded on $L^p(D)$ for $p\geq2+\frac{1}{k}$ where $k$ is a positive integer depending on the domain. In a recent series of papers \cite{KrantzPeloso07Statements}-\cite{KrantzPeloso07Proofs} Krantz and Peloso show that on the non-smooth worm domain $\mathcal{D}_{\beta}\subset \mathbb{C}^2$, the ordinary projection $\mathbf{B}_{\mathcal{D}_{\beta}}$ is bounded on $L^p(\mathcal{D}_{\beta})$ only if $p\in(\frac{2}{1+v_{\beta}},\frac{2}{1-v_{\beta}})$ where $v_{\beta}$ is determined by winding of the domain $\mathcal{D}_{\beta}$. Recently in \cite{BarrettSahutoglu}, the authors obtained irregularity results for the Bergman projections of some higher dimensional versions of worm domains in $\mathbb{C}^n$, $n\geq 3$.\\

\subsection{Outline and Results}
This paper consists of two parts. In the first part (Sections 2 and 3), we focus on the case $\Omega$ is the unit disc $\mathbb{D}$ in $\mathbb{C}^1$ and we vary the weight $\mu$ on $\mathbb{D}$. We investigate how $L^p$ mapping properties of weighted Bergman projections change as weights change on $\mathbb{D}$. In the second part (Section 4), we focus on ordinary Bergman projections of higher dimensional domains i.e. the weight is fixed to Lebesgue measure and the domain is perturbed. In this part, we apply Forelli-Rudin's inflation idea  to weighted examples in the first part to construct domains in $\mathbb{C}^2$ whose ordinary Bergman projections exhibit irregularities in $L^p$ scale. The following two theorems formulate weighted results.\\
\begin{theorem}\label{Type2mainIntroduction}
If $\lambda$ is a radial weight on $\mathbb{D}$ which satisfies \symbolfootnote[2]{Here, we 
abuse the notation and consider $\lambda$ as a function on $[0,1]$ and by $\lambda(z)$ we mean $\lambda(|z|)$.}
\begin{enumerate}
\item $\lambda(r)$ is a smooth function on $[0,1]$,
\item $\lambda^{\left(n\right)}(1):=\frac{d^n}{dr^n}\lambda(1)=0$ for any $n\in\mathbb{N}$,
\item for any $n\in\mathbb{N}$ there exists $a_n\in(0,1)$ such that $(-1)^n\lambda^{\left(n\right)}(r)$ is non-negative on the interval $(a_n,1)$.
\end{enumerate}
Then the weighted Bergman projection $\mathbf{B}_{\lambda}$ is bounded from $L^p(\mathbb{D}, \lambda)$ to $L^p(\mathbb{D}, \lambda)$ only for $p=2$.
\end{theorem}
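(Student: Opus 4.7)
The plan is to exploit the radial symmetry of $\lambda$: decompose $L^p(\mathbb{D},\lambda)$ into angular sectors indexed by the Fourier frequency in $\theta$, show that $\wB$ preserves each sector and acts there as a rank-one projection, compute the sectorial operator norm exactly, and finally demonstrate that this norm diverges with the frequency whenever $p\ne 2$.

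Concretely, since $\lambda$ is radial the monomials $\{z^n\}_{n\ge 0}$ are orthogonal in $L^2(\mathbb{D},\lambda)$ with norms $c_n:=\|z^n\|_{2,\lambda}^2=2\pi\int_0^1 r^{2n+1}\lambda(r)\,dr$, and the kernel is $B_\lambda(z,w)=\sum_{n\ge 0}(z\bar w)^n/c_n$. For $n\in\mathbb{Z}$ let $V_n:=\{g(r)e^{in\theta}:g\text{ radial}\}\subset L^p(\mathbb{D},\lambda)$. By orthogonality of $\{e^{ik\theta}\}$ in the angular variable, $\wB V_n\subset V_n$, with $\wB|_{V_n}=0$ for $n<0$ and, for $n\ge 0$,
\[
\wB\bigl(g(r)e^{in\theta}\bigr)\;=\;\frac{2\pi}{c_n}\!\left(\int_0^1\! s^{n+1}g(s)\lambda(s)\,ds\right)r^n e^{in\theta}.
\]
This is a rank-one operator in the radial space $L^p(r\lambda\,dr)$ onto $\mathrm{span}\{r^n\}$. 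Setting $M_k:=\int_0^1 r^{k+1}\lambda(r)\,dr$ (so $c_n=2\pi M_{2n}$) and saturating H\"older's inequality with the extremiser $g(r)=r^{n/(p-1)}$, one finds the exact formula
\[
\|\wB|_{V_n}\|_{L^p\to L^p}\;=\;\frac{M_{np}^{1/p}M_{np'}^{1/p'}}{M_{2n}},\qquad \tfrac{1}{p}+\tfrac{1}{p'}=1.
\]
H\"older also gives $M_{2n}\le M_{np}^{1/p}M_{np'}^{1/p'}$ with equality only when $p=2$, so the sectorial norm equals $1$ at $p=2$ and the theorem reduces to showing this ratio tends to $\infty$ as $n\to\infty$ for every $p\ne 2$.

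To establish the divergence, set $\ell(k):=\log M_k$. Log-convexity of moments (Cauchy--Schwarz) makes $\ell$ convex, and the logarithm of our ratio is the Jensen gap $\frac{1}{p}\ell(np)+\frac{1}{p'}\ell(np')-\ell(2n)\ge 0$ (note $np/p+np'/p'=2n$). Hypotheses (1) and (2) permit the integration-by-parts identity
\[
M_k\;=\;\frac{(-1)^N}{(k+2)(k+3)\cdots(k+N+1)}\int_0^1 r^{k+N+1}\lambda^{(N)}(r)\,dr\qquad(N\in\mathbb{N}),
\]
and hypothesis (3) pins down the sign of $(-1)^N\lambda^{(N)}$ near $r=1$, forcing $M_k=O(k^{-N})$ for every $N$, i.e., super-polynomial decay. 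A second-order Taylor expansion of $\ell$ about $2n$ (with first-order contributions cancelling identically, since $(np-2n)/p+(np'-2n)/p'=0$) then gives
\[
\tfrac{1}{p}\ell(np)+\tfrac{1}{p'}\ell(np')-\ell(2n)\;\ge\;\tfrac12\,(pp'-4)\,n^2\!\!\inf_{t\in[np\wedge np',\,np\vee np']}\!\ell''(t),
\]
with $pp'-4>0$ precisely when $p\ne 2$. The super-polynomial decay of $M_k$, combined with convexity of $\ell$, forces $n^2\ell''(n)\to\infty$, whence the sectorial norms diverge and $\wB$ fails to be bounded on $L^p(\mathbb{D},\lambda)$ for every $p\ne 2$.

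The principal obstacle lies in this final quantitative step: extracting $n^2\ell''(n)\to\infty$ from the qualitative flatness conditions (1)--(3). I would handle it via the substitution $r=e^{-s}$, which recasts $M_k$ as the Laplace transform at $k$ of $\tilde\lambda(s):=\lambda(e^{-s})e^{-2s}$, a function that is $C^\infty$-flat at $s=0$ by (2). Standard Laplace-method asymptotics, driven by the signed integration-by-parts identity above and condition (3) to control signs, then produce a precise decay rate for $-\ell(k)$ strong enough to guarantee the required strict convexity of $\ell$ at every scale.
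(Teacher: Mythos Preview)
Your strategy is close to the paper's: both reduce unboundedness to the blow-up of a ratio of moments and both invoke the integration-by-parts identity together with hypothesis~(3). Your exact sectorial norm formula $M_{np}^{1/p}M_{np'}^{1/p'}/M_{2n}$ is in fact sharper than the paper's choice of test functions $z^{km}\bar z^{m}$, which are not extremisers. But the final quantitative step contains a genuine gap.

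You assert that super-polynomial decay of $M_k$ together with convexity of $\ell=\log M_k$ forces $n^{2}\inf_{t\in[np',np]}\ell''(t)\to\infty$. This implication is false for general convex functions: one can build smooth convex $\ell$ with $\ell(k)/\log k\to-\infty$ yet $\ell''$ nearly vanishing on arbitrarily long intervals (think of smoothed piecewise-linear functions with widely spaced kinks). Your Taylor bound only sees $\inf\ell''$ over an interval of length comparable to $n$, so such valleys would kill the argument. The proposed Laplace-transform workaround is too vague to close this; flatness of $\tilde\lambda$ at $s=0$ and the sign condition~(3) do not by themselves yield the needed pointwise lower bound on $\ell''$.

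The paper resolves exactly this difficulty, and the fix slots directly into your framework. From your own integration-by-parts identity, $M_k$ is comparable (for each fixed $N$, once $k$ is large) to
\[
\Theta_N(k)\;=\;\frac{1}{(k+2)\cdots(k+N+1)}\,\widetilde\Phi_N(k),\qquad \widetilde\Phi_N(k)=\int_{a_N}^{1}r^{k+N+1}(-1)^N\lambda^{(N)}(r)\,dr,
\]
where hypothesis~(3) makes the integrand nonnegative, so $\log\widetilde\Phi_N$ is convex by H\"older. Now $-k^{2}(\log\Theta_N)''(k)=\sum_{j=2}^{N+1}\dfrac{k^{2}}{(k+j)^{2}}+k^{2}(\log\widetilde\Phi_N)''(k)\ge N/2$ for $k$ large. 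The crucial point is that comparability $M_k\asymp\Theta_N(k)$ does \emph{not} transfer to second derivatives of logarithms, so you must replace $M_k$ by $\Theta_N$ \emph{before} taking the Taylor expansion: the ratio $M_{np}^{1/p}M_{np'}^{1/p'}/M_{2n}$ changes only by a bounded factor, and then your Jensen-gap inequality applied to $\log\Theta_N$ gives a lower bound $\ge c_p\exp\bigl(c_p' N\bigr)$ for all large $n$. Since $N$ is arbitrary, the sectorial norms diverge. This is precisely the mechanism in the paper; what you were missing is that hypothesis~(3) is used not merely to get a sign in the decay estimate, but to isolate a log-convex factor whose second derivative can be discarded with the correct sign.
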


The conditions in Theorem \ref{Type2mainIntroduction} can be checked for particular weights and we do this in the corollary following the proof in the second section. In particular, we recover and extend the result in \cite{Dostanic04}. The proof uses successive integration by parts to compute the asymptotics of the moment function of weight $\lambda$. The infinite order vanishing of $\lambda$ is crucial to integrate by parts infinitely many times.\\
 
\begin{theorem}\label{One}
For any given $p_0>2$ there exists a weight $\mu_0$ on $\mathbb{D}$ such that the weighted projection $\mathbf{B}_{\mu_0}$ is bounded on $L^p(\mathbb{D}, \mu_0)$ only if $p\in(q_0,p_0)$, where $\frac{1}{p_0}+\frac{1}{q_0}=1$.
\end{theorem}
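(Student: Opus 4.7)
The plan is to construct a radial admissible weight $\mu_0$ on $\mathbb{D}$ for which $\mathbf{B}_{\mu_0}$ is unbounded on $L^p(\mathbb{D},\mu_0)$ for every $p\geq p_0$; the duality/self-adjointness observation recalled in the introduction then forces failure for $p\leq q_0$ as well, leaving at most the interval $(q_0,p_0)$ in the regularity set. Because $\mu_0$ will be radial, the monomials $\{z^n\}_{n\geq 0}$ form an orthogonal basis of $L^2_a(\mathbb{D},\mu_0)$ with squared norms equal to $2\pi c_n$, where $c_n=\int_0^1 r^{2n+1}\mu_0(r)\,dr$. The weighted Bergman kernel consequently admits the explicit expansion
\begin{equation*}
B_{\mu_0}(z,w)=\frac{1}{2\pi}\sum_{n=0}^{\infty}\frac{(z\bar w)^{n}}{c_n},
\end{equation*}
and every $L^p$ estimate ultimately reduces to the large-$n$ asymptotics of $c_n$.

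The weight itself will be built as a parametrised modification of a Theorem~\ref{Type2mainIntroduction} weight, in which the infinite-order vanishing of the boundary factor is \emph{throttled} by an extra power-type factor calibrated to $p_0$. Concretely, I would work with a one-parameter family of the form $\mu_t(z)=(1-|z|^2)^{s(t)}\lambda_t(z)$, where $\lambda_t$ satisfies the hypotheses of Theorem~\ref{Type2mainIntroduction} with a profile depending monotonically on $t$, and then apply Laplace's method (in the integration-by-parts spirit used in Section~2) to derive moment asymptotics of the form $c_n(t)\sim A_t\,n^{-\sigma_t}\exp(-B_t n^{\tau_t})$ with $\tau_t\in(0,1)$. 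As $t\downarrow 0$ the exponential factor disappears and one recovers classical $L^p$-boundedness on all of $(1,\infty)$; as $t$ grows the $L^p$-regularity set shrinks toward the single point $\{2\}$ of Theorem~\ref{Type2mainIntroduction}. Choosing $t=t(p_0)$ so that the associated threshold exponent equals $p_0$, I then set $\mu_0:=\mu_{t(p_0)}$.

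To produce $L^p$-unboundedness for $p\geq p_0$, I would test $\mathbf{B}_{\mu_0}$ against the non-holomorphic family $f_N(z)=\bar z^{N}\chi_{A_N}(|z|)$, where $A_N\subset[0,1)$ is a thin annulus whose scale is dictated by the ratio $c_N/c_{N-1}$. The reproducing-kernel expansion collapses $\mathbf{B}_{\mu_0}f_N$ to essentially a single monomial with coefficient proportional to $1/c_{N-1}$; combining the Laplace-type asymptotics above with direct computations of the weighted $L^p$-norms of $f_N$ and its image then yields
\begin{equation*}
\frac{\|\mathbf{B}_{\mu_0}f_N\|_{p,\mu_0}}{\|f_N\|_{p,\mu_0}}\longrightarrow\infty,\qquad N\to\infty,
\end{equation*}
for every $p\geq p_0$, which is the desired conclusion.

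The principal difficulty is the sharp calibration step: arranging the parameters so that the test-function ratio blows up at exactly the prescribed $p_0$ and not at any nearby value. This requires both matching upper and lower asymptotics for the moment sequence $c_n(t)$, together with a careful selection of the annular scale for the test family. The integration-by-parts machinery invoked in the proof of Theorem~\ref{Type2mainIntroduction} is well suited to producing the upper bound, while the alternating-sign monotonicity condition (iii) of that theorem can be exploited to extract the matching lower bound.
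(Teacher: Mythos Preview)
Your proposal has a genuine gap at the calibration step, and it is not merely a technical detail: the moment asymptotics you write down cannot produce a finite threshold $p_0>2$. If $c_n\sim A_t\,n^{-\sigma_t}\exp(-B_t n^{\tau_t})$ with $\tau_t\in(0,1)$, then setting $\theta(x)=-\log\Phi(x)$ one has $\theta''(x)\sim -\sigma_t x^{-2}-B_t\tau_t(1-\tau_t)x^{\tau_t-2}$, hence $-x^2\theta''(x)\to\infty$. Feeding this into the mean-value argument of Theorem~\ref{Type2mainIntroduction} gives $R_k(m)\to\infty$ for \emph{every} $p\in(1,2)$, i.e.\ unboundedness for all $p\neq 2$, not just for $p\geq p_0$. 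This is exactly why the weights $\exp(-B/(1-r^2)^\alpha)$ of Corollary~\ref{DostanicCorollary} (which yield precisely your asymptotic form via Laplace's method, with $\tau=\alpha/(\alpha+1)$) sit in the ``$p=2$ only'' class. The family you describe therefore interpolates between ``all $p$'' ($\tau_t=0$, polynomial moments) and ``only $p=2$'' ($\tau_t>0$) with no intermediate regime; there is no value of $t$ at which a finite $p_0$ appears. Your acknowledged ``principal difficulty'' is in fact an obstruction.

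The paper's argument proceeds along an entirely different axis: it gives up radiality. One takes $\omega=|g|^2$ with $g$ holomorphic and nonvanishing on $\mathbb{D}$, so that the orthonormal bases of $L^2_a(\mathbb{D})$ and $L^2_a(\mathbb{D},\omega)$ differ by the factor $g$, yielding the kernel identity $g(z)B_\omega(z,w)\overline{g(w)}=B_1(z,w)$. This converts $L^p(\mathbb{D},|g|^2)$-boundedness of $\mathbf{B}_\omega$ into $L^p(\mathbb{D},|g|^{2-p})$-boundedness of the \emph{unweighted} projection $\mathbf{B}_1$, which after conformal transfer to the upper half-plane is characterised exactly by the Bekoll\'e--Bonami condition $|(g\circ\phi)\phi'|^{2-p}\in A_p^+(\mathbb{R}^2_+)$. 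Choosing $g$ so that $|g(z)|^2$ behaves like $|z-1|^{4/(p_0-2)}$ (equivalently $F(\zeta)=\zeta^{2/(p_0-2)}$ on $\mathbb{R}^2_+$), one checks the $A_p^+$ inequality directly and finds it holds precisely for $p\in(q_0,p_0)$. The sharp cutoff thus comes from the local integrability threshold of a power weight at a single boundary point, not from moment asymptotics of a radial weight.
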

This theorem is the first appearance of weights of this type. The proof is constructive and weights are explicitly written down. One key ingredient of the proof is the Bekoll\'e-Bonami condition. Lanzani and Stein present a clear explanation of this condition in \cite{LanzaniStein04}.\\

In Section 4, by using the weighted results on $\mathbb{D}$, we construct domains in $\mathbb{C}^2$ with irregular ordinary Bergman projections. The following theorems formulate these constructions.\\
\begin{theorem}\label{Two}
There are bounded domains $\Omega$ in $\mathbb{C}^2$ such that the ordinary Bergman projections of these domains $\Omega$ are bounded on $L^p(\Omega)$ only for $p=2$.
\end{theorem}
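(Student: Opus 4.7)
The plan is to deploy the Forelli--Rudin inflation mechanism previewed in the outline, turning the weighted disc examples of Theorem \ref{Type2mainIntroduction} into a domain in $\mathbb{C}^2$. Fix a weight $\lambda$ on $\mathbb{D}$ satisfying hypotheses (1)--(3) of Theorem \ref{Type2mainIntroduction}, chosen additionally strictly positive on $[0,1)$; smoothness on $[0,1]$ gives that $\lambda$ is bounded. Set
$$\Omega \;:=\; \{(z,w)\in\mathbb{C}^2 : z\in\mathbb{D},\ |w|^2<\lambda(|z|)\},$$
a bounded Hartogs-type domain over $\mathbb{D}$; it is open and connected since $\lambda>0$ on $\mathbb{D}$ (path-connected via the axis $w=0$).

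The Hartogs expansion in $w$ yields the classical Forelli--Rudin decomposition of the Bergman kernel,
$$B_\Omega\bigl((z,w),(\zeta,\eta)\bigr) \;=\; \sum_{k=0}^\infty \frac{k+1}{\pi}\,B_{\mathbb{D},\lambda^{k+1}}(z,\zeta)\,(w\bar\eta)^k,$$
obtained from $L^2$-orthogonality of the monomials $w^k$ on each Hartogs fiber $\{|\eta|^2<\lambda(\zeta)\}$. For a function of the base variable alone, $F(z,w)=f(z)$, the moments $\int_{|\eta|^2<\lambda(\zeta)}\bar\eta^k\,dA(\eta)$ vanish for $k\geq 1$, so only the $k=0$ term contributes and a short computation gives
$$\BOm F(z,w)\;=\;\wB f(z).$$
Meanwhile a slice integration in $w$ produces the $L^p$ norm identity
$$\|F\|_{L^p(\Omega)}^p \;=\; \pi\int_\mathbb{D}|f(z)|^p\lambda(|z|)\,dA(z) \;=\; \pi\,\|f\|_{L^p(\mathbb{D},\lambda)}^p.$$

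Now suppose $\BOm$ is bounded on $L^p(\Omega)$ with operator norm $C$. For any $f\in L^p(\mathbb{D},\lambda)\cap L^2(\mathbb{D},\lambda)$ the lift $F(z,w)=f(z)$ lies in $L^p(\Omega)\cap L^2(\Omega)$, and combining the two displays above yields
$$\|\wB f\|_{L^p(\mathbb{D},\lambda)} \;=\; \pi^{-1/p}\|\BOm F\|_{L^p(\Omega)} \;\leq\; C\,\|f\|_{L^p(\mathbb{D},\lambda)}.$$
Because $L^p\cap L^2$ contains every bounded function supported compactly in $\mathbb{D}$ and $\lambda$ is integrable, this test space is dense in $L^p(\mathbb{D},\lambda)$, so $\wB$ extends to a bounded operator on $L^p(\mathbb{D},\lambda)$. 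Theorem \ref{Type2mainIntroduction} then forces $p=2$. The main delicacy in executing this plan is justifying the pointwise identity $\BOm F=\wB f$ rigorously as an identity of integral operators rather than merely as an $L^2$-projection statement; this is handled by first verifying the identity in the $L^2$ setting (where it is immediate from the kernel expansion) and then extending to the relevant $L^p$ class by the density just noted, using boundedness of $\Omega$ to ensure $L^p$-lifts automatically sit in $L^2$ when $p\geq 2$.
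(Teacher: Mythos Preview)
Your proof is correct and follows essentially the same Forelli--Rudin inflation route as the paper (Proposition~\ref{firsthalf} combined with Theorem~\ref{Type2mainIntroduction}). Your observation that $\BOm F(z,w)=\wB f(z)$ is actually \emph{constant} in $w$ gives the exact norm identity $\|\BOm F\|_{p,\Omega}=\pi^{1/p}\|\wB f\|_{p,\lambda}$, which is a slight sharpening of the paper's argument, where only $\BOm F(z,0)=\wB f(z)$ is computed and the sub-mean value inequality is invoked to obtain $\|\BOm F\|_{p,\Omega}\gtrsim\|\wB f\|_{p,\lambda}$.
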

The remarks at the end of Barrett's paper \cite{Barrett84} contain an example of a similar domain that is smoothly bounded but not complete Hartogs. The domains we construct here are even Reinhardt but do not have smooth boundary. 

Additionally, the domains in Theorem \ref{Two} are simply connected. This highlights one more difference between one complex variable and several complex variables. In \cite{LanzaniStein04} and \cite{Hedenmalm02}, it is shown that there exists a universal constant $r>2$ such that the ordinary Bergman projection of any simply connected proper domain $D$ in $\mathbb{C}^1$ is bounded from $L^p(D)$ to $L^p(D)$ at least for any $p\in(r',r)$, where $\frac{1}{r'}+\frac{1}{r}=1$. We see that in $\mathbb{C}^n$ for $n\geq 2$ there exists no such a universal constant.\\

\begin{theorem}\label{Three}
For any given $p_0>2$, there is a bounded domain $\Omega_0$ in $\mathbb{C}^2$ such that the ordinary Bergman projection $\mathbf{B}_{\Omega_0}$ is bounded on $L^p(\Omega_0)$ only if $p\in(q_0,p_0)$, where $\frac{1}{p_0}+\frac{1}{q_0}=1$.
\end{theorem}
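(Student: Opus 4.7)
The plan is to apply the Forelli--Rudin inflation device to the weighted example constructed in Theorem \ref{One}, as flagged in the outline. Let $\mu_0$ be the weight on $\mathbb{D}$ provided by that theorem, so that $\mathbf{B}_{\mu_0}$ is bounded on $L^p(\mathbb{D},\mu_0)$ only for $p\in(q_0,p_0)$. I would form the bounded Hartogs domain
$$\Omega_0=\{(z,w)\in\mathbb{C}^2 : z\in\mathbb{D},\ |w|^2<\mu_0(z)\},$$
whose boundedness is automatic from $\mu_0\in L^\infty(\mathbb{D})$.

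The key computational input is the Fourier decomposition of holomorphic functions on $\Omega_0$ in the fiber coordinate $w$. Writing $f(z,w)=\sum_{k\geq 0}f_k(z)w^k$ and integrating over the fibers $\{|w|^2<\mu_0(z)\}$ in polar coordinates, one obtains
$$\|f\|_{L^2(\Omega_0)}^2=\sum_{k\geq 0}\frac{\pi}{k+1}\|f_k\|_{2,\mu_0^{k+1}}^2,$$
so that $L^2_a(\Omega_0)$ splits as an orthogonal Hilbert sum of the weighted Bergman spaces $L^2_a(\mathbb{D},\mu_0^{k+1})$. A short direct calculation then shows that $\mathbf{B}_{\Omega_0}$ preserves each $w^k$-sector and acts on the $k$-th slot exactly as $\mathbf{B}_{\mathbb{D},\mu_0^{k+1}}$. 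In particular, the $k=0$ slot reproduces $\mathbf{B}_{\mu_0}$ verbatim, and for a function independent of $w$, say $f(z,w)=g(z)$,
$$\mathbf{B}_{\Omega_0}f(z,w)=\mathbf{B}_{\mu_0}g(z)\qquad\text{and}\qquad\|f\|_{L^p(\Omega_0)}^p=\pi\|g\|_{p,\mu_0}^p.$$

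From here the ``only if'' direction follows by pulling back a test function. Given $p\geq p_0$, Theorem \ref{One} produces $g\in L^p(\mathbb{D},\mu_0)$ with $\mathbf{B}_{\mu_0}g\notin L^p(\mathbb{D},\mu_0)$; the lift $f(z,w)=g(z)$ then sits in $L^p(\Omega_0)$ while $\mathbf{B}_{\Omega_0}f$ does not, so $\mathbf{B}_{\Omega_0}$ fails to be bounded on $L^p(\Omega_0)$ for every $p\geq p_0$. The complementary range $p\leq q_0$ is handled by the duality/self-adjointness observation from the introduction.

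The main technical obstacle is justifying the Forelli--Rudin splitting rigorously in this low-regularity setting: the weight $\mu_0$ is not smooth, so $\partial\Omega_0$ is rough and one cannot appeal to classical boundary regularity. One needs to verify admissibility of each power $\mu_0^{k+1}$ (to make sense of $L^2_a(\mathbb{D},\mu_0^{k+1})$ as a closed subspace) and establish the orthogonal decomposition above by direct $L^2$ arguments on the Hartogs domain. Both should be accessible from the explicit description of $\mu_0$ furnished in the proof of Theorem \ref{One}.
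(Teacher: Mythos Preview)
Your irregularity argument is correct and is essentially the paper's Proposition~\ref{firsthalf}: lift a bad sequence from $L^p(\mathbb{D},\mu_0)$ to $L^p(\Omega_0)$ via $F(z,w)=g(z)$, identify $\mathbf{B}_{\Omega_0}F$ with $\mathbf{B}_{\mu_0}g$ on the zeroth $w$-slot, and compare norms. (The paper does this slightly differently, evaluating $\mathbf{B}_{\Omega}F$ only at $w=0$ via the mean value property and then invoking the sub-mean value inequality, but the content is the same.) So for the theorem as literally stated---an ``only if''---your plan works.

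However, you are missing precisely the part that the paper singles out as the point of Theorem~\ref{Three}. The paper actually proves the two-sided statement: $\mathbf{B}_{\Omega_0}$ is bounded on $L^p(\Omega_0)$ \emph{if and only if} $p\in(q_0,p_0)$, and the introduction explicitly says that ``the main difference between this theorem and other $L^p$ irregularity results in the literature is the regularity part.'' Your proposal does not touch the regularity direction at all, and the mechanism that makes it go through is absent from your outline. The key is that the weight coming out of Theorem~\ref{One} is not just any admissible $\mu_0$ but has the special form $\omega=|g|^2$ with $g$ holomorphic and nonvanishing on $\mathbb{D}$. This gives, via \eqref{kernelrelation}, the identity $K_m(z,t)=g(z)^{-m}B_\omega(z,t)\overline{g(t)}^{-m}$ for every $m$, and hence a factorization
\[
B_\Omega\bigl[(z,w),(t,s)\bigr]=B_\omega(z,t)\,B_1\!\left(\tfrac{w}{g(z)},\tfrac{s}{g(t)}\right),
\]
which lets one estimate $\|\mathbf{B}_\Omega F\|_{p,\Omega}$ by successively applying the $L^p$-boundedness of $\mathbf{B}_\omega$ and of the unweighted $\mathbf{B}_1$ (Proposition~\ref{secondhalf}). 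There is no analogue of this for a general weight, and this is exactly why the paper remarks that the converse of Proposition~\ref{firsthalf} is not known in general.

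Two smaller points. First, the ``technical obstacle'' you flag---admissibility of the powers $\mu_0^{k+1}$ and a rigorous Forelli--Rudin splitting---is not where the difficulty lies; neither direction of the paper's argument uses that. Second, boundedness of $\Omega_0$ is not automatic from Theorem~\ref{One} as stated: the weight in Corollary~\ref{Type3Corollary} is unbounded on $\mathbb{D}$, and one must pass to the modified choice in Remark~\ref{remarkF} (giving $\omega(z)=|z-1|^{4/(p_0-2)}$) to get a bounded domain.
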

The main difference between this theorem and other $L^p$ irregularity results in the literature is the regularity part of the statement. Namely, we not only prove unboundedness but also prove that Bergman projection is bounded for a certain range.\\

The content of this paper is a part of my PhD dissertation at The Ohio State University. 
I thank J.D. McNeal, my advisor, for introducing me to this field and helping me with various points. 
I thank K. Koenig for helpful suggestions during this project.
I also thank the anonymous referee for helpful recommendations to improve the presentation of the paper.  \\

\section{Proof of Theorem \ref{Type2mainIntroduction}}
The first examples of weights of kind in Theorem \ref{Type2mainIntroduction} appear in \cite{Dostanic04}. 
Before the proof of Theorem \ref{Type2mainIntroduction}, we present the following corollary 
to give explicit examples of weights satisfying the properties listed in the theorem.

\begin{corollary}\label{DostanicCorollary}
Let
\begin{equation}\label{Dostanic}
\lambda(r)=(1-r^2)^A\exp\left(\frac{-B}{(1-r^2)^{\alpha}}\right)
\end{equation}
for some $A\geq 0, B>0, \alpha>0$. Then $\lambda$ satisfies the conditions listed in Theorem \ref{Type2mainIntroduction} and
$\mathbf{B}_{\lambda}$ is bounded from $L^p(\lambda)$ to $L^p(\lambda)$ only for $p=2$ and unbounded for $p\in(1,2)$.
\end{corollary}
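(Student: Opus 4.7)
The plan is to verify the three hypotheses of Theorem \ref{Type2mainIntroduction} for $\lambda(r)=(1-r^{2})^{A}\exp\!\bigl(-B(1-r^{2})^{-\alpha}\bigr)$; once this is done, the theorem yields that $\mathbf{B}_{\lambda}$ is bounded on $L^{p}(\mathbb{D},\lambda)$ only when $p=2$, which in particular gives unboundedness on $L^{p}(\lambda)$ for $p\in(1,2)$ (the interval $(2,\infty)$ being controlled symmetrically via self-adjointness and duality). Throughout, I would work with the auxiliary function $F(u)=u^{A}\exp(-Bu^{-\alpha})$, so that $\lambda(r)=F(u(r))$ with $u(r)=1-r^{2}$.

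\emph{Step 1: smoothness and flatness at $r=1$ (conditions (1) and (2)).} An induction on $k$, based on Leibniz and on $\tfrac{d}{du}\exp(-Bu^{-\alpha})=\alpha Bu^{-\alpha-1}\exp(-Bu^{-\alpha})$, shows that $F^{(k)}(u)$ is a finite linear combination of terms of the shape $u^{A-j-l\alpha}\exp(-Bu^{-\alpha})$ with integers $j,l\ge 0$. Because $\exp(-Bu^{-\alpha})$ decays faster than any power of $u$ as $u\to 0^{+}$, each such term tends to $0$, so $F$ extends to a $C^{\infty}$ function on $[0,\infty)$ with $F^{(k)}(0)=0$ for every $k$. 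Composing with the smooth $u(r)=1-r^{2}$ gives $\lambda\in C^{\infty}([0,1])$ and $\lambda^{(n)}(1)=0$ for every $n\in\mathbb{N}$.

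\emph{Step 2: the sign of $(-1)^{n}\lambda^{(n)}(r)$ near $r=1$ (condition (3)).} A sharper form of the same induction tracks the leading term: each differentiation of the exponential contributes one factor of $\alpha Bu^{-\alpha-1}$, dropping the exponent on $u$ by $\alpha+1$. Carrying this out gives the asymptotic
\begin{equation*}
F^{(k)}(u)=(\alpha B)^{k}\,u^{A-k(\alpha+1)}\exp(-Bu^{-\alpha})\bigl(1+o(1)\bigr)\qquad(u\to 0^{+}),
\end{equation*}
which is strictly positive near $0$. Faà di Bruno's formula, together with $u'(r)=-2r$, $u''(r)=-2$, and $u^{(j)}\equiv 0$ for $j\ge 3$, yields
\begin{equation*}
\lambda^{(n)}(r)=\sum_{m_{1}+2m_{2}=n}\frac{n!}{m_{1}!\,m_{2}!}\,F^{(m_{1}+m_{2})}\!\bigl(u(r)\bigr)\,(-2r)^{m_{1}}(-1)^{m_{2}}.
\end{equation*}
The index $(m_{1},m_{2})=(n,0)$ maximizes $m_{1}+m_{2}$, and by the asymptotic above produces the most singular power of $u$ near $r=1$ (exponent $A-n(\alpha+1)$, strictly more negative than $A-(n-m_{2})(\alpha+1)$ for any $m_{2}\ge 1$). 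Consequently
\begin{equation*}
\lambda^{(n)}(r)\sim(-2r)^{n}(\alpha B)^{n}(1-r^{2})^{A-n(\alpha+1)}\exp\!\left(-\frac{B}{(1-r^{2})^{\alpha}}\right)\qquad(r\to 1^{-}),
\end{equation*}
whose sign is $(-1)^{n}$ because every factor other than $(-2r)^{n}$ is positive for $r$ close to $1$. Hence $(-1)^{n}\lambda^{(n)}\ge 0$ on some interval $(a_{n},1)$, which is exactly condition (3).

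\emph{Main obstacle.} The one delicate point is the ``dominant term'' step: one must confirm that every contribution in the Faà di Bruno expansion with $m_{2}\ge 1$ is absorbed by the one with $m_{2}=0$ as $r\to 1^{-}$. This absorption is clean precisely because $\alpha+1>0$, so each unit of $m_{2}$ strictly raises the exponent on $1-r^{2}$ in the leading asymptotic. With conditions (1)--(3) verified, Theorem \ref{Type2mainIntroduction} applies and delivers the corollary.
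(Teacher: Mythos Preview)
Your proof is correct and follows the same strategy as the paper's: compute $\lambda^{(n)}(r)$, isolate the dominant term as $r\to 1^{-}$, and observe that its sign is $(-1)^{n}$. The paper only sketches the special case $A=0$, $B=1$, $\alpha=1$ by writing $\lambda^{(n)}(r)=\bigl(\tfrac{(-2r)^{n}}{(1-r^{2})^{2n}}+\text{lower order terms}\bigr)e^{-1/(1-r^{2})}$, whereas you handle the general parameters rigorously via the substitution $u=1-r^{2}$ and Fa\`a di Bruno's formula---a cleaner bookkeeping device, but the underlying idea is identical.
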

\noindent The claim of the corollary was first proven in \cite{Dostanic04} with the restriction $0<\alpha\leq 1$.\\

\begin{proof}
We have to check the functions defined by \eqref{Dostanic} satisfy the properties in Theorem \ref{Type2mainIntroduction}. The first two conditions follow immediately and the last one can be seen by a careful look at the successive derivatives. We do this here only for the special case $A=0, B=1, \alpha=1$ and the general case follows similarly.
We have
\begin{align*}
\lambda(r)&=\exp\left(\frac{-1}{1-r^2}\right),\\
\lambda'(r)&=\left(\frac{-2r}{(1-r^2)^2}\right)\exp\left(\frac{-1}{1-r^2}\right),\\
\vdots \\
\lambda^{(n)}(r)&=\left(\frac{(-2r)^n}{(1-r^2)^{2n}}+ \text{lower order terms} \right)\exp\left(\frac{-1}{1-r^2}\right).
\end{align*}
As $r$ gets closer to $1$, the dominant term in the parenthesis is $\frac{(-2r)^n}{(1-r^2)^{2n}}$ and this term satisfies the  third condition.\\
\end{proof}

\begin{proof}[Proof of Theorem \ref{Type2mainIntroduction}]
It is clear that the weighted projection $\wB$ is bounded for $p=2$, so in order to prove the theorem, we have to show unboundedness for $1<p<2$.

\noindent \textit{Step One.} Analyze the moment function $\Phi(x)=\int_0^1r^{2x+1}\lambda(r)dr$, for $x\geq 0$.\\

For any $n>0$ we integrate by parts to obtain
\begin{align*}
\Phi(x)=\int_0^1r^{2x+1}\lambda(r)dr&=\frac{1}{2x+2}\int_0^1r^{2x+2}(-1)\lambda^{(1)}(r)dr\\
&=\vdots\\
&=\frac{1}{2x+2}\dots\frac{1}{2x+1+n}\int_0^1r^{2x+1+n}(-1)^n\lambda^{(n)}(r)dr.
\end{align*}
For convenience we use notation $\psi_n(r)=(-1)^n\lambda^{(n)}(r)$ and $\Phi_n(x)=\int_0^1r^{2x+1+n}\psi_n(r)dr$. Therefore, for any $n>0$
\begin{equation}\label{nrelation}
\Phi(x)=\frac{1}{2x+2}\dots\frac{1}{2x+1+n}\Phi_n(x).
\end{equation}

\noindent At this stage we need the third condition of the theorem because we do not know if $\Phi_n(x)$ is log-convex. Since $\psi_n(r)$ is not necessarily non-negative on $(0,1)$ we cannot use H\"older's inequality. Fortunately, we know that $\psi_n(r)$ is non-negative on $(a_n,1)$ and for large values of $x$ two integrals $\int_0^1r^{2x+1+n}\psi_n(r)dr$ and $\int_{a_n}^1r^{2x+1+n}\psi_n(r)dr$ are almost the same.

To make this point rigorous, we define
{\allowdisplaybreaks
\begin{equation}\label{tildemoment}
\widetilde{\Phi}_n(x)=\int_{a_n}^1r^{2x+1+n}\psi_n(r)dr.
\end{equation}
Note that
\begin{align*}
\left|\frac{\Phi_n(x)}{\widetilde{\Phi}_n(x)}-1\right|&=\left|\frac{\int^{a_n}_0r^{2x+1+n}\psi_n(r)dr}{\int_{a_n}^1r^{2x+1+n}\psi_n(r)dr}\right|\\&\leq \frac{\int^{a_n}_0r^{2x+1+n}|\psi_n(r)|dr}{\int_{a_n}^1r^{2x+1+n}\psi_n(r)dr}\\&\leq \max_{0\leq s \leq a_n}|\psi_n(s)|\frac{\int^{a_n}_0r^{2x+1+n}dr}{a_n^{2x+1+n}\int_{a_n}^1\psi_n(r)dr}\\&=\frac{\max_{0\leq s \leq a_n}|\psi_n(s)|}{\int_{a_n}^1\psi_n(r)dr}\frac{\frac{a_n^{2x+2+n}}{2x+2+n}}{a_n^{2x+1+n}}\\
&=\frac{\max_{0\leq s \leq a_n}|\psi_n(s)|}{\int_{a_n}^1\psi_n(r)dr}\frac{a_n}{2x+2+n}\\&=C(n)\frac{1}{2x+2+n}.
\end{align*}
Thus, for any $n>0$
\begin{equation}\label{limit}
\lim_{x\to\infty}\frac{\Phi_n(x)}{\widetilde{\Phi}_n(x)}=1.
\end{equation}}

\noindent If we combine \eqref{nrelation} and \eqref{limit} we get for any $n>0$ there exists $X(n)$ such that for any $x>X(n)$ we have
\begin{equation}\label{Theta}
\frac{1}{2}\left(\frac{1}{2x+2}\dots\frac{1}{2x+1+n}\widetilde{\Phi}_n(x)\right)\leq \Phi(x)\leq 2\left(\frac{1}{2x+2}\dots\frac{1}{2x+1+n}\widetilde{\Phi}_n(x)\right).
\end{equation}
Again, for convenience  label $\Theta_n(x)=\frac{1}{2x+2}\dots\frac{1}{2x+1+n}\widetilde{\Phi}_n(x)$ and write $\Theta_n(x)=e^{-\theta_n(x)}$.

\noindent We also note that by H\"older's inequality, for any $0<t<1$ and $x,y>0$, we have
\begin{align*}
\widetilde{\Phi}_n(tx+(1-t)y)&=\int_{a_n}^1r^{2tx+2(1-t)y+1+n}\psi_n(r)dr\\
&=\int_{a_n}^1\left(r^{2x+1+n}\psi_n(r)\right)^t\left(r^{2y+1+n}\psi_n(r)\right)^{1-t}dr\\
&\leq\left(\int_{a_n}^1r^{2x+1+n}\psi_n(r)dr\right)^t\left(\int_{a_n}^1r^{2x+1+n}\psi_n(r)dr\right)^{1-t}\\
&=\left(\widetilde{\Phi}_n(x)\right)^t\left(\widetilde{\Phi}_n(y)\right)^{1-t}.
\end{align*}
Thus, $\log\widetilde{\Phi}_n(x)$ is convex.
\vskip 1 cm
\noindent \textit{Step Two.} A specific sequence of functions.

\noindent We take $k,m\in \mathbb{N}$ and consider the action of $\wB$ on functions $z^{km}\bar{z}^m$. A simple calculation shows that
\begin{equation*}
\wB(z^{km}\bar{z}^m)=\frac{\Phi(km)}{\Phi\left((k-1)m\right)}z^{(k-1)m}.
\end{equation*}
Now we focus on the following ratio
\begin{align*}
R_k(m)&=\frac{||\wB(z^{km}\bar{z}^m)||_{p,\lambda}^p}{||z^{km}\bar{z}^m||_{p,\lambda}^p}\\
&=\left(\frac{\Phi(km)}{\Phi\left((k-1)m\right)}\right)^p\frac{||z^{(k-1)m}||_{p,\lambda}^p}{||z^{km}\bar{z}^m||_{p,\lambda}^p}\\
&=\left(\frac{\Phi(km)}{\Phi\left((k-1)m\right)}\right)^p\frac{\Phi(\frac{p}{2}(k-1)m)}{\Phi(\frac{p}{2}(k+1)m)}.
\end{align*}
Given $1<p<2$, fix $k>\frac{2+p}{2-p}$ independent of $m$. This choice of $k$ gives us the following inequalities
\begin{equation}\label{conf}
\frac{p}{2}(k-1)m<\frac{p}{2}(k+1)m<(k-1)m<km.
\end{equation}

\noindent By \eqref{Theta} we know that for any $n>0$ there exists $M(n)$ such that for all $m>M(n)$ we have
\begin{align*}
R_k(m)&\geq C(p)\left(\frac{\Theta_n(km)}{\Theta_n\left((k-1)m\right)}\right)^p\frac{\Theta_n(\frac{p}{2}(k-1)m)}{\Theta_n(\frac{p}{2}(k+1)m)}\\
&=C(p)\exp \left[p\theta_n\left((k-1)m\right)-p\theta_n\left(km\right)+\theta_n\left(\frac{p}{2}(k+1)m\right)-\theta_n\left(\frac{p}{2}(k-1)m\right)\right]\\
&=C(p)\exp\left[ -pm\theta_n'(v_m)+pm\theta_n'(w_m)\right]\\
&=C(p)\exp \left[ -pm(v_m-w_m)\theta_n''(u_m)\right]
\end{align*}
where we used the mean value theorem twice and $w_m\in(\frac{p}{2}(k-1)m,\frac{p}{2}(k+1)m)$, $v_m\in((k-1)m,km)$ and $u_m\in(w_m,v_m)$. Further note that $w_m,v_m,u_m$ and $v_m-w_m$ are comparable to $m$, where the comparison constants only depend on $p$ (they also depend on $k$ but recall $k$ is fixed and it depends on $p$). Therefore, for all $m>M(n)$
\begin{equation}\label{ratiolower}
R_k(m)\geq C(p)\exp[-D(p)u_m^2\theta_n''(u_m)]
\end{equation}
where $C(p)$ and $D(p)$ are strictly positive constants that depend only on $p$.
\vskip 1cm
\noindent \textit{Step Three.} Second derivative of $\theta_n(x)$.

\noindent Now we consider $\theta_n''(x)$, for any $n>0$. We have
\begin{align*}
\theta_n(x)&=-\log \Theta_n(x)\\
&=\log(2x+2)+\dots+\log(2x+1+n)-\log\widetilde{\Phi}_n(x).
\end{align*}
This implies
\begin{align*}
-x^2\theta_n''(x)&=4\left(\frac{x^2}{(2x+2)^2}+\dots+\frac{x^2}{(2x+1+n)^2}\right)+x^2[\log\widetilde{\Phi}_n(x)]''.
\end{align*}
Note that $\frac{x^2}{(2x+1+n)^2}\geq\frac{1}{8}$ for sufficiently large $x$ (for fixed $n$), and if we use the fact that $\log\widetilde{\Phi}_n(x)$ is convex then for any $n>0$ there exists $X(n)>0$ such that for all $x>X(n)$ we have
\begin{equation}\label{estimate}
-x^2\theta_n''(x)\geq 4\left(\frac{1}{8}+\dots+\frac{1}{8}\right)=\frac{n}{2}.
\end{equation}

\noindent If we combine this estimate with the inequality \eqref{ratiolower} we obtain that for any $n>0$ there exists $M(n,p)$ such that for all $m>M(n,p)$ we have
$$R_k(m)\geq C(p)\exp [D(p)\frac{n}{2}].$$
This certainly implies that for any $1<p<2$ $$\lim_{m\to \infty}R_k(m)=\infty$$
and this concludes the proof.\\
\end{proof}

Note that if $p=2$ then we can not have \eqref{conf} and further one can easily see that $R(k,m)\leq 1$ for any $k,m>0$ by H\"older's inequality. This complies with the fact that $\wB$ is bounded if $p=2$.\\

6

\section{Proof of Theorem \ref{One}}
In this section, we prove Theorem \ref{One}. We use a theorem of Bekoll{\'e}-Bonami which was announced in \cite {BekolleBonami78} and explained well in \cite{LanzaniStein04} and \cite{Borichev04}. This theorem is similar to Muckenhoupt's $A_p$ condition for the Hilbert transform. For $A_p$ weights, see \cite{Muckenhoupt72} and \cite{CoifmanFefferman74}.

One point to distinguish between the discussion here and the classification result of Bekoll{\'e}-Bonami is that in our work the projection operator changes as the weight changes. However, in \cite{BekolleBonami78} or \cite{LanzaniStein04}, the (ordinary) projection operator is fixed and only the function spaces change as the weight changes.\\

We start with copying the following definition and the theorem from \cite{LanzaniStein04}. We use $\uplane$ to denote the upper half plane $\{z\in\mathbb{C}^1~:~\Im (z)>0\}$.

\begin{definition}\label{def}
A weight $\mu$ on $\mathbb{R}^2_+$ is said to be in class $A_p^+(\mathbb{R}^2_+)$ if there exists $C>0$ such that for any disc $D=D(x_0,R)$, where $x_0\in\mathbb{R}$ and $R>0$, we have
\begin{equation}\label{Apinequality}
\frac{1}{|D\cap\mathbb{R}^2_+|^p}\int_{D\cap\mathbb{R}^2_+}\mu(z)dA(z)\left(\int_{D\cap\mathbb{R}^2_+}\mu(z)^{\frac{1}{1-p}}dA(z)\right)^{p-1}\leq C.
\end{equation}
Here $|.|$ denotes standard Lebesgue measure.
\end{definition}

\begin{theorem}\label{LanzaniStein}{\textnormal{(Bekoll\'e \& Bonami - Lanzani \& Stein)}}
Let $\mu$ be a weight on $\uplane$ then $\mathbf{P_1}$\symbolfootnote[2]{In this section, $\mathbf{P_1}$ denotes the ordinary Bergman projection on $\uplane$.} is bounded from $L^p(\mathbb{R}^2_+, \mu)$ to $L^p(\mathbb{R}^2_+, \mu)$ if and only if
$\mu\in A_p^+(\mathbb{R}^2_+).$
\end{theorem}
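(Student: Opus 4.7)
The plan is to prove Theorem \ref{LanzaniStein} by dominating the Bergman projection by a geometric maximal function adapted to Carleson tents, and then running the standard Muckenhoupt-type machinery at the level of tents. Recall that on $\uplane$ the Bergman kernel is $P_1(z,w) = -1/\bigl(\pi(z-\bar w)^2\bigr)$, so one has the pointwise bound
\begin{equation*}
|\mathbf{P_1} f(z)| \leq \frac{1}{\pi}\int_{\uplane} \frac{|f(w)|}{|z-\bar w|^2}\,dA(w).
\end{equation*}
For a bounded interval $I \subset \mathbb{R}$ I would introduce the Carleson tent $T_I = \{x+iy : x\in I,\ 0<y<|I|\}$. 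A direct calculation shows $|z-\bar w|^2 \asymp |I|^2$ whenever $I$ is the smallest dyadic interval for which both $z$ and $w$ lie in $T_I$; summing a geometric series over dyadic scales then yields the pointwise estimate
\begin{equation*}
\int_{\uplane} \frac{|f(w)|}{|z-\bar w|^2}\,dA(w) \lesssim \mathcal{M} f(z), \qquad \mathcal{M} f(z) := \sup_{z\in T_I}\frac{1}{|T_I|}\int_{T_I}|f|\,dA.
\end{equation*}

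For the sufficiency direction (that $\mu \in \Aptt$ implies $L^p(\mu)$-boundedness), I would prove that $\mathcal{M}$ is bounded on $L^p(\uplane,\mu)$ by mimicking the classical Muckenhoupt argument with tents replacing cubes. The two ingredients are a Vitali-type covering lemma for tents (simpler than for Euclidean balls since tents containing a fixed point are totally ordered by inclusion) and the reverse-H\"older self-improvement property of $A_p$ weights, which in the tent setting goes through exactly as in the Euclidean case. A final geometric check is that the disc-based condition of Definition \ref{def} is equivalent to the tent-based $A_p$ condition
\begin{equation*}
\sup_I \frac{1}{|T_I|^p}\int_{T_I}\mu\,dA \left(\int_{T_I}\mu^{\frac{1}{1-p}}\,dA\right)^{p-1} < \infty,
\end{equation*}
because $T_I$ is comparable to $D(x_I,|I|)\cap\uplane$ with $x_I$ the midpoint of $I$, and every half-disc $D(x_0,R)\cap\uplane$ with center on $\mathbb{R}$ is sandwiched between constant multiples of such a tent.

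For the necessity I would test boundedness on $f_I = \mu^{\frac{1}{1-p}}\mathbf{1}_{T_I}$. On the ``top'' $T_I^{\mathrm{top}} := \{x+iy : x\in I,\ |I|/2 < y < |I|\}$ one has $|z-\bar w| \asymp |I|$ for every $w\in T_I$, so
\begin{equation*}
|\mathbf{P_1} f_I(z)| \gtrsim \frac{1}{|I|^2}\int_{T_I}\mu^{\frac{1}{1-p}}\,dA
\end{equation*}
pointwise on $T_I^{\mathrm{top}}$. Raising to the $p$-th power, integrating against $\mu$ over $T_I^{\mathrm{top}}$, and comparing with $\|f_I\|_{p,\mu}^p = \int_{T_I}\mu^{\frac{1}{1-p}}\,dA$ yields precisely inequality \eqref{Apinequality} (after transferring from tents back to half-discs as above). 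Holomorphicity of $\mathbf{P_1} f_I$ is used here only to rule out unwanted cancellation in the lower bound, which is why one restricts to the top half of the tent where the argument of $(z-\bar w)^{-2}$ stays in a fixed sector.

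The main obstacle I expect is the pointwise majorization $|\mathbf{P_1}|f \lesssim \mathcal{M} f$: one must carefully organize the decomposition $\uplane = \bigcup T_I$ dyadically and verify that the geometric series produced by summing the kernel over dyadic shells $\{w : 2^k|I| \leq |z-\bar w| < 2^{k+1}|I|\}$ is summable uniformly in $z$. Once this domination and the equivalence between the tent and half-disc versions of $\Aptt$ are in place, the remainder of the proof is a direct transcription of the real-variable $A_p$ theory to the Bergman geometry of the upper half-plane.
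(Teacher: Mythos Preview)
The paper does not actually prove this theorem; its proof consists of the single line ``See \cite[Proposition 4.5]{LanzaniStein04}.'' So there is no argument in the paper to compare against, and your proposal should be evaluated on its own merits.

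Unfortunately there is a genuine gap in your sufficiency argument: the claimed pointwise domination
\[
\int_{\uplane}\frac{|f(w)|}{|z-\bar w|^{2}}\,dA(w)\ \lesssim\ \mathcal{M}f(z)
\]
is simply false. Take $f\equiv 1$; the right-hand side equals $1$, while the left-hand side is $\int_{\uplane}|z-\bar w|^{-2}\,dA(w)=\infty$ (the integrand decays only like $|w|^{-2}$ at infinity). The reason your geometric-series heuristic breaks down is that at dyadic scale $2^{k}$ the kernel is $\sim 2^{-2k}$ and the annulus has area $\sim 2^{2k}$, so each scale contributes a term comparable to an average of $|f|$ with \emph{no} decaying factor; summing over $k$ gives $\sum_{k}\mathcal{M}f(z)$, not a convergent geometric series. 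In other words, the positive Bergman operator is genuinely \emph{not} subordinate to the Bergman maximal function, and this is precisely what makes the Bekoll\'e--Bonami theorem nontrivial.

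The actual proofs (Bekoll\'e's original, and the streamlined version in \cite{LanzaniStein04}) proceed instead through a weak-type or good-$\lambda$ inequality: one controls the distribution function of the positive Bergman operator in terms of that of $\mathcal{M}$ via a Calder\'on--Zygmund--type stopping-time decomposition on tents, using the $A_p^{+}$ condition to absorb the ``bad'' part. Your necessity sketch is closer to correct, but you should also check the sector claim more carefully: with $z$ in the top half of $T_I$ and $w$ ranging over all of $T_I$, the argument of $(z-\bar w)^{-2}$ sweeps through an interval of length greater than $\pi$, so some cancellation can occur; the standard fix is either to shrink the test region further or to first pass to the positive operator $f\mapsto\int|P_1(\cdot,w)|f(w)\,dA(w)$ and test that.
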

\begin{proof} See \cite[Proposition 4.5]{LanzaniStein04}.\\ \end{proof}

The goal in the present section is to relate this result to weighted Bergman projections. Unfortunately, it is not simple to do this for an arbitrary weight $\mu$ since it is not simple to relate the weighted Bergman projection $\Bmu$ and the ordinary Bergman projection  $\mathbf{B_1}$ in general. But the good news is that this relation is possible if we focus on weights $\mu$ of the form $\mu=|g|^2$ for a non-vanishing holomorphic function $g$ on $\mathbb{D}$. The following theorem expresses this idea.\\

For the rest of this section, let $\phi(\zeta)=\frac{i-\zeta}{i+\zeta}$ be the biholomorphism from $\mathbb{R}^2_+$ to $\mathbb{D}$, and $\psi$ be the inverse of $\phi.$

\begin{theorem}\label{Type3}
Let $g$ be a holomorphic function on $\mathbb{D}$ which does not vanish inside $\mathbb{D}$. Let $\omega=|g|^2$ be the weight and $p\in (1,\infty)$. Then the following are equivalent
\begin{enumerate}
\item $\mathbf{B_{|g|^2}}$ is bounded from $L^p(\mathbb{D}, |g|^2)$ to $L^p(\mathbb{D}, |g|^2)$,
\item $\mathbf{B_1}$ is bounded from $L^p(\mathbb{D}, |g|^{2-p})$ to $L^p(\mathbb{D}, |g|^{2-p})$,
\item $\mathbf{P_1}$ is bounded from $L^p(\mathbb{R}^2_+, |(g\circ\phi)\phi'|^{2-p})$ to $L^p(\mathbb{R}^2_+, |(g\circ\phi)\phi'|^{2-p})$,
\item $|(g\circ\phi)\phi'|^{2-p}\in A_p^+(\mathbb{R}^2_+).$
\end{enumerate}
\end{theorem}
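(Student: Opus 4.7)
The plan is to prove the chain $(1) \iff (2) \iff (3) \iff (4)$, where each equivalence comes from a concrete intertwining formula combined with an isometric change of the underlying $L^p$ space.

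For $(1) \iff (2)$, I would first establish the identity
\[
\mathbf{B}_{|g|^2} f = \frac{1}{g}\,\mathbf{B_1}(fg).
\]
This follows from the fact that multiplication by $g$ is a unitary bijection $L^2_a(\mathbb{D},|g|^2) \to L^2_a(\mathbb{D})$: if $h \in L^2_a(\mathbb{D},|g|^2)$ then $hg$ is holomorphic with $\|hg\|_{L^2(\mathbb{D})} = \|h\|_{L^2(\mathbb{D},|g|^2)}$, and the inverse is multiplication by $1/g$ (which makes sense since $g$ never vanishes). Applying this to characterize the $|g|^2$-orthogonal projection of $f$ gives the intertwining formula above. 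Next, the same map $f \mapsto fg$ is an isometric bijection $L^p(\mathbb{D},|g|^2) \to L^p(\mathbb{D},|g|^{2-p})$, since $\int |fg|^p |g|^{2-p}\,dA = \int |f|^p |g|^2\,dA$. Under this isometry, the operator $\mathbf{B}_{|g|^2}$ on the left-hand space is conjugate to $\mathbf{B_1}$ on the right-hand space, so boundedness is equivalent.

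For $(2)\iff(3)$, I would use the conformal transformation rule for the Bergman kernel,
\[
B_{\uplane}(\eta,\zeta) = \phi'(\eta)\,\overline{\phi'(\zeta)}\,B_{\mathbb{D}}(\phi(\eta),\phi(\zeta)),
\]
which, together with the Jacobian factor $|\phi'|^2$ in the change of variables $z = \phi(\zeta)$, yields the intertwining
\[
\bigl(\mathbf{B_1}f\bigr)\circ\phi \cdot \phi' \;=\; \mathbf{P_1}\bigl((f\circ\phi)\,\phi'\bigr).
\]
Then the map $V: f \mapsto (f\circ\phi)\,\phi'$ is an isometric bijection from $L^p(\mathbb{D},|g|^{2-p})$ to $L^p\!\bigl(\uplane,|(g\circ\phi)\phi'|^{2-p}\bigr)$, because the change of variables $z = \phi(\zeta)$, which produces a factor $|\phi'(\zeta)|^2$, combines with the $|\phi'|^p$ already present in $|(f\circ\phi)\phi'|^p$ and the $|\phi'|^{2-p}$ from the weight to give exactly $\int_{\mathbb{D}} |f|^p|g|^{2-p}\,dA$. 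As before, the intertwining formula lets us transfer boundedness.

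The equivalence $(3) \iff (4)$ is just an invocation of Theorem \ref{LanzaniStein}, applied to the weight $\mu = |(g\circ\phi)\phi'|^{2-p}$ on $\uplane$. The main technical point—and the only place I expect care to be needed—is bookkeeping of the weight and Jacobian factors in step $(2)\iff(3)$, so that the exponents $2-p$ and $p$ line up correctly after the change of variables; everything else is a direct consequence of the fact that $g$ and $\phi'$ are non-vanishing holomorphic functions, so that multiplication by them (and composition with $\phi$) yields isometric bijections between the relevant spaces.
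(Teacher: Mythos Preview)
Your proposal is correct and follows essentially the same route as the paper: the paper also proves $(3)\Leftrightarrow(4)$ by citing Theorem~\ref{LanzaniStein}, proves $(2)\Leftrightarrow(3)$ via the conformal transformation rule for the Bergman kernel together with the change-of-variables isometry $f\mapsto (f\circ\phi)\phi'$, and proves $(1)\Leftrightarrow(2)$ via the identity $g\cdot\mathbf{B}_{|g|^2}f=\mathbf{B_1}(fg)$ (which the paper derives from the orthonormal-basis representation of the kernel rather than from the unitary argument you give, but this is the same fact). Your framing in terms of conjugation by isometric bijections is slightly more streamlined than the paper's explicit norm computations, but the content is identical.
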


\begin{proof}
\noindent The equivalence of $(3)$ and $(4)$ is nothing but Theorem \ref{LanzaniStein}.

Let's look at the equivalence of $(2)$ and $(3)$. A more general form of this equivalence can be proved. Namely, if $\mu$ is a weight on $\mathbb{D}$ then the following two are equivalent
\begin{itemize}
\item $\mathbf{B_1}$ is bounded on $L^p(\mathbb{D}, \mu)$,
\item $\mathbf{P_1}$ is bounded on $L^p(\mathbb{R}^2_+, (\mu\circ\phi)|\phi'|^{2-p})$.
\end{itemize}
Put $\Lambda(\nu)=(\mu\circ\phi(\nu))|\phi'(\nu)|^{2-p}$, and let $B_1(z,w)$ and $P_1(\zeta,\nu)$ be the ordinary Bergman kernels on $\mathbb{D}$ and $\uplane$, respectively. The following transformation formulas are well known:
\begin{align*}
P_1(\zeta,\nu)&=\phi'(\zeta)~B_1\left(\phi(\zeta),\phi(\nu)\right)~\overline{\phi'(\nu)},\\
B_1(z,w)&=\psi'(z)~P_1\left(\psi(z),\psi(w)\right)~\overline{\psi'(w)}.
\end{align*}
Take $f\in L^p(\mathbb{R}^2_+, \Lambda)$ and change variables to obtain
\begin{align*}
\int_{\uplane}|f(\nu)|^p\Lambda(\nu)dA(\nu)&=\int_{\uplane}|f(\nu)|^p(\mu\circ\phi(\nu))|\phi'(\nu)|^{2-p}dA(\nu)\\
&=\int_{\mathbb{D}}|f(\psi(w))|^p|\psi'(w)|^p\mu(w)dA(w).
\end{align*}
Thus, $(f\circ\psi)\psi'\in L^p(\mathbb{D}, \mu).$ Now we consider the action of projection operators
\begin{align*}
\mathbf{B_1}\left[ f(\psi(w)\psi'(w)\right](z)&=\int_{\mathbb{D}}B_1(z,w)f(\psi(w))\psi'(w)dA(w)\\
&=\int_{\uplane}B_1(z,\phi(\nu))f(\nu)\overline{\phi'(\nu)} dA(w).
\end{align*}
This implies
\begin{align*}
\mathbf{B_1}\left[ f(\psi(w))\psi'(w)\right](\phi(\zeta))&=\int_{\uplane}B_1(\phi(\zeta),\phi(\nu))f(\nu)\overline{\phi'(\nu)} dA(w)\\
&=\frac{1}{\phi'(\zeta)}\int_{\uplane}P_1(\zeta,\nu)f(\nu)dA(\nu)\\
&=\frac{1}{\phi'(\zeta)}\mathbf{P_1}f(\zeta).
\end{align*}

We assume that $\mathbf{B_1}$ is bounded on $L^p(\mathbb{D},\mu)$ and prove that $\mathbf{P_1}$ is bounded on $L^p(\uplane,\Lambda)$ as follows
\begin{align*}
||\mathbf{P_1}f||_{p,\Lambda}^p&=\int_{\uplane}\left|\mathbf{P_1}f(\zeta)\right|^p\Lambda(\zeta)dA(\zeta)=
\int_{\uplane}\left|\mathbf{P_1}f(\zeta)\right|^p(\mu\circ\phi(\zeta))|\phi'(\zeta)|^{2-p}dA(\zeta)\\&=\int_{\uplane}|\phi'(\zeta)|^p\left| \mathbf{B_1}\left[ f(\psi) \psi' \right](\phi(\zeta))\right|^p(\mu\circ\phi(\zeta))|\phi'(\zeta)|^{2-p}dA(\zeta)\\&=\int_{\mathbb{D}}\left| \mathbf{B_1}\left[ f(\psi) \psi' \right](z)\right|^p\mu(z)dA(z)~\text{ (use boundedness)}\\&\leq C \int_{\mathbb{D}}\left| f(\psi) \psi' \right|^p\mu(z)dA(z)\\&=C\int_{\uplane}|f(\zeta)|^p(\mu\circ\phi(\zeta))|\phi'(\zeta)|^{2-p}dA(\zeta)\\&\leq C||f||_{p,\Lambda}^p.
\end{align*}
The same arguments above similarly prove that boundedness of $\mathbf{P_1}$ implies boundedness of $\mathbf{B_1}$. Therefore, we finish the proof of the equivalence of $(2)$ and $(3)$.
\vskip 1cm
Next, we prove the equivalence of $(1)$ and $(2)$. We start with an identity between the kernels. Let $B_{\omega}(z,w)$ be the weighted Bergman kernel ($\omega=|g|^2$). By using the orthonormal representation for the kernel we obtain
\begin{equation}\label{kernelrelation}
g(z)B_{\omega}(z,w)\overline{g(w)}=B_1(z,w).
\end{equation}
Indeed, if $\{e_n(z)\}$ is an orthonormal basis for $L_a^2(1)$, then $\{\frac{e_n(z)}{g(z)}\}$ is an orthonormal basis for $L_a^2(\omega)$.

\noindent By using this relation between the kernels we obtain the following relation between the operators
\begin{equation}\label{operatorrelation}
g(z)(\mathbf{B_{\omega}}f)(z)=(\mathbf{B}_1(f.g))(z) \quad \text{ for }f\in L^2(\omega).
\end{equation}

\noindent Suppose (2) is true. Then
\begin{align*}
||\mathbf{B_{\omega}}f||^p_{p,\omega}&=\int_{\mathbb{D}}|(\mathbf{B_{\omega}}f)(z)|^p|g(z)|^2&\\
&=\int_{\mathbb{D}}|(\mathbf{B}_1(f.g))(z)|^p|g(z)|^{2-p}=||\mathbf{B}_1(f.g)||^p_{p,|g|^{2-p}}&\\
&\lesssim||f.g||^p_{p,|g|^{2-p}}=||f||^p_{p,\omega}&
\end{align*}
and (1) follows.

\noindent Now suppose (1) is true. Then
\begin{align*}
||\mathbf{B}_1f||^p_{p,|g|^{2-p}}&=\int_{\mathbb{D}}|(\mathbf{B}_1f)(z)|^p|g(z)|^{2-p}&\\
&=\int_{\mathbb{D}}|(\mathbf{B}_{\omega}(f/g))(z)|^p|g(z)|^{2}=||\mathbf{B}_{\omega}(\frac{f}{g})||^p_{p,\omega}&\\
&\lesssim||\frac{f}{g}||^p_{p,\omega}=||f||^p_{p,|g|^{2-p}}&
\end{align*}
and (2) follows. This finishes the proof of the equivalence of $(1)$ and $(2)$.\\
\end{proof}
\begin{remark}
Absence of a relation of the form \eqref{operatorrelation} for an arbitrary weight $\mu$ is the main difficulty to generalize Theorem \ref{Type3} to larger classes of weights.
\end{remark}
\noindent Just for clarity, we rewrite the first and the last condition in Theorem \ref{Type3} as follows.
\begin{corollary}\label{F}
Let $F$ be a non-vanishing holomorphic function on $\mathbb{R}^2_+$ and let $\omega(z)=|(F\circ\psi(z))\psi'(z)|^2$ then $\Bom$ is bounded on $L^p(\mathbb{D}, \omega)$ if and only if $|F|^{2-p}\in A^+_p(\mathbb{R}^2_+).$\\
\end{corollary}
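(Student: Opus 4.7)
The plan is to recognize Corollary \ref{F} as a cosmetic reformulation of the equivalence $(1)\Leftrightarrow(4)$ in Theorem \ref{Type3}, achieved by choosing the correct $g$ on $\mathbb{D}$ to match the given $F$ on $\mathbb{R}^2_+$. Concretely, I would set
\[
g(z) := F(\psi(z))\,\psi'(z) \qquad (z\in\mathbb{D}),
\]
and observe that $\psi'$ is holomorphic and nowhere vanishing on $\mathbb{D}$ (since $\psi:\mathbb{D}\to\mathbb{R}^2_+$ is a biholomorphism) while $F\circ\psi$ is holomorphic and non-vanishing by hypothesis on $F$, so $g$ is a holomorphic, non-vanishing function on $\mathbb{D}$ and thus a legitimate input to Theorem \ref{Type3}.

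Next I would verify the two matching identities. First, $|g(z)|^2 = |F(\psi(z))\psi'(z)|^2 = \omega(z)$, so $\mathbf{B}_{|g|^2} = \Bom$ and the weighted $L^p$ spaces in condition $(1)$ of Theorem \ref{Type3} are exactly those in the corollary. Second, differentiating the identity $\psi\circ\phi = \mathrm{id}_{\mathbb{R}^2_+}$ yields $\psi'(\phi(\zeta))\phi'(\zeta) = 1$, and therefore
\[
(g\circ\phi)(\zeta)\,\phi'(\zeta) = F(\zeta)\,\psi'(\phi(\zeta))\,\phi'(\zeta) = F(\zeta),
\]
which gives $|(g\circ\phi)\phi'|^{2-p} = |F|^{2-p}$. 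This matches condition $(4)$ of Theorem \ref{Type3} with the Bekoll\'e--Bonami condition appearing in the corollary.

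With these two identifications in hand, the equivalence $(1)\Leftrightarrow(4)$ of Theorem \ref{Type3} translates verbatim into the statement of Corollary \ref{F}. There is no substantive obstacle here; the only point that requires a brief check is the chain-rule cancellation $\psi'(\phi(\zeta))\phi'(\zeta) = 1$, which is precisely what allows the Jacobian factor $\psi'$ inserted into the definition of $g$ to disappear upon pullback to the upper half plane, leaving only $F$ in the Bekoll\'e--Bonami condition.
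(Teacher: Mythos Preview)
Your proposal is correct and is exactly the approach the paper takes: the paper introduces Corollary~\ref{F} with the sentence ``we rewrite the first and the last condition in Theorem~\ref{Type3} as follows,'' offering no further argument, and your choice $g=(F\circ\psi)\psi'$ together with the chain-rule identity $\psi'(\phi(\zeta))\phi'(\zeta)=1$ is precisely the (unstated) computation behind that rewriting.
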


\noindent The next corollary gives explicit examples of weights in Theorem \ref{One}.

\begin{corollary}\label{Type3Corollary}
Let $F(\zeta)=\zeta^{2/3}$ for $\zeta \in \mathbb{R}^2_+$ and $\omega=|(F\circ\psi(z))\psi'(z)|^2$. The weighted projection $\mathbf{B_{\omega}}$ is bounded on $L^p(\omega)$ for $p\in (\frac{5}{4},5)$ and unbounded for any other values of $p$.
\end{corollary}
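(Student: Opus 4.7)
The plan is to reduce to an $A_p^+$ calculation with a power weight via Corollary \ref{F}. The function $F(\zeta) = \zeta^{2/3}$ admits a single-valued nonvanishing holomorphic branch on $\uplane$, so $|F(\zeta)|^{2-p} = |\zeta|^{2(2-p)/3}$. Setting $\alpha = \frac{2(2-p)}{3}$, Corollary \ref{F} says that $\Bom$ is bounded on $L^p(\mathbb{D}, \omega)$ if and only if $|\zeta|^\alpha \in A_p^+(\uplane)$, and the claimed range $p \in (5/4, 5)$ is exactly $-2 < \alpha < 2(p-1)$.

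First I would reduce the $A_p^+$ test to discs centered at the origin. For $D(x_0, R)$ with $R \le |x_0|/2$, the weight $|\zeta|^\alpha$ is comparable to the constant $|x_0|^\alpha$ on $D(x_0, R) \cap \uplane$, and a constant weight satisfies \eqref{Apinequality} with a uniform bound. For $R > |x_0|/2$ we have $D(x_0, R) \cap \uplane \subset D(0, 3R) \cap \uplane$ together with $|D(x_0, R) \cap \uplane| \gtrsim R^2$, so the $A_p^+$ quotient over $D(x_0, R)$ is controlled by (and controls) the analogous quotient over a disc $D(0, cR)$ centered at the origin.

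Next I would carry out the computation on $D(0, R) \cap \uplane$ in polar coordinates. One finds
\begin{equation*}
\int_{D(0,R) \cap \uplane} |\zeta|^\alpha \, dA = \frac{\pi R^{\alpha+2}}{\alpha+2}
\end{equation*}
provided $\alpha > -2$, and the dual integral $\int_{D(0,R) \cap \uplane} |\zeta|^{\alpha/(1-p)} dA$ is finite precisely when $\alpha < 2(p-1)$, in which case it equals $\frac{\pi R^{\alpha/(1-p)+2}}{\alpha/(1-p)+2}$. Plugging both into \eqref{Apinequality} with $|D(0, R) \cap \uplane| = \pi R^2/2$, the powers of $R$ cancel exactly, leaving a finite constant depending only on $p$. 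This gives $|\zeta|^\alpha \in A_p^+(\uplane)$, and hence boundedness of $\Bom$, for every $p \in (5/4, 5)$.

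For the necessity, outside this range one of the two integrals already diverges on any disc containing the origin: if $p \ge 5$ then $\alpha \le -2$ and $\int_0^R r^{\alpha+1} dr = \infty$; if $p \le 5/4$ then $\alpha/(1-p) \le -2$ and the dual integral blows up. Hence \eqref{Apinequality} cannot hold uniformly, and Corollary \ref{F} forces $\Bom$ to be unbounded on $L^p(\mathbb{D},\omega)$. The main thing to get right is just the exponent bookkeeping; there is no substantive analytic obstacle here, since Theorem \ref{Type3} has already packaged the hard work into a Bekoll\'e--Bonami / Muckenhoupt check on a power weight.
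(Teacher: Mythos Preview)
Your proposal is correct and follows essentially the same route as the paper: reduce via Corollary~\ref{F} to checking whether $|\zeta|^{\frac{2}{3}(2-p)}\in A_p^+(\uplane)$, split the discs $D(x_0,R)$ according to whether they are near or far from the origin (the paper uses the dichotomy ``$D\cap D(0,2R)$ empty or not'', you use ``$R\le |x_0|/2$ or not'', which is the same split up to constants), and then compute in polar coordinates on discs centered at $0$. The only cosmetic difference is that the paper verifies the $A_p^+$ condition only for $2\le p<5$ and then invokes duality and self-adjointness of $\Bom$ to cover $q_0<p<2$, whereas you check both inequalities $-2<\alpha$ and $\alpha<2(p-1)$ directly, which handles the full range $(5/4,5)$ in one pass.
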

It is easy to see that the exponent $\frac{2}{3}$ is not special. We can generalize the corollary so that for any given $p_0>2$ we can find a weight function $\omega_0$ (take $F(\zeta)=\zeta^{\frac{2}{p_0-2}}$) for which the boundedness range is exactly $(q_0,p_0)$. This proves Theorem \ref{One} stated in Introduction.\\

\begin{proof}
By Corollary \ref{F}, we need to check for which values of $p$, $$|\zeta|^{\frac{2}{3}(2-p)}\in\Aptt.$$
We start with $p\geq 5$. In this case, $\frac{2}{3}(2-p)=-2-2\epsilon$ for some $\epsilon\geq0$. Also we take $\mathbf{D}_1=D(0,1)\cap \uplane$ then
\begin{align*}
\int_{\mathbf{D}_1}|\zeta|^{\frac{2}{3}(2-p)}dA(\zeta)=c\int_0^1r^{-2-2\epsilon}rdr=\infty.
\end{align*}
This shows that the $\Aptt$ inequality fails and $|\zeta|^{\frac{2}{3}(2-p)}\not\in\Aptt$ for $p\geq5$. Consequently, $\Bom$ is unbounded on $L^p(\omega)$ for $p\geq5$.\\
The next step is $2\leq p<5$. In this case, $\frac{2}{3}(2-p)=-2+2\epsilon$ for some $\epsilon>0$. Given any $\mathbf{D}_2=D(x_0,R)$, for $x_0\in \mathbb{R}$ and $R>0$. There are two possibilities: either $\mathbf{D}_2\cap D(0,2R)$ is empty or not.

\noindent Suppose $\mathbf{D}_2 \cap D(0,2R)$ is not empty; then clearly $\mathbf{D}_2\subset D(0,4R)$, and
\begin{align*}
&\frac{1}{|\mathbf{D}_2\cap\mathbb{R}^2_+|^p}\int_{\mathbf{D}_2\cap\mathbb{R}^2_+}|F(z)|^{2-p}dA(z)
\left(\int_{\mathbf{D}_2\cap\mathbb{R}^2_+}|F(z)|^{\frac{2-p}{1-p}}dA(z)\right)^{p-1}\\ 
&\leq \frac{c}{R^{2p}}\int_{D(0,4R)\cap\mathbb{R}^2_+}|F(z)|^{2-p}dA(z)
\left(\int_{D(0,4R)\cap\mathbb{R}^2_+}|F(z)|^{\frac{2-p}{1-p}}dA(z)\right)^{p-1}\\
&=\frac{c}{R^{2p}}\int_0^{4R}r^{-2+2\epsilon}rdr
\left(\int_0^{4R} r^{\frac{2-2\epsilon}{4-3\epsilon}}rdr\right)^{4-3\epsilon}\\
&=\frac{c}{R^{2p}}R^{2\epsilon}R^{10-8\epsilon}\\
&=c.
\end{align*}
This implies the supremum over discs of this type is finite.

\noindent Suppose $\mathbf{D}_2 \cap D(0,2R)$ is empty; then clearly $|z|\sim |x_0|$ for any $z\in \mathbf{D}_2$, and
\begin{align*}
&\frac{1}{|\mathbf{D}_2\cap\mathbb{R}^2_+|^p}\int_{\mathbf{D}_2\cap\mathbb{R}^2_+}|F(z)|^{2-p}dA(z)
\left(\int_{\mathbf{D}_2\cap\mathbb{R}^2_+}|F(z)|^{\frac{2-p}{1-p}}dA(z)\right)^{p-1}\\ 
&\leq \frac{c}{R^{2p}}\int_{\mathbf{D}_2\cap\mathbb{R}^2_+}|x_0|^{\frac{2}{3}(2-p)}dA(z)
\left(\int_{\mathbf{D}_2\cap\mathbb{R}^2_+}|x_0|^{\frac{2(2-p)}{3(1-p)}}dA(z)\right)^{p-1}\\
&=\frac{c}{R^{2p}}|\mathbf{D}_2||x_0|^{-2+2\epsilon}|\mathbf{D}_2|^{4-3\epsilon}|x_0|^{2-2\epsilon}\\
&=c.
\end{align*}
This again implies that the supremum over discs of this type is finite. These two cases show that
for $2\leq p<5$, $|\zeta|^{2-p}\in \Aptt$. Consequently, $\Bom$ is bounded on $L^p(\omega)$ for $2\leq p<5$.\\

\noindent This with the duality and the self adjointness of $\Bom$ finish the proof of Corollary \ref{Type3Corollary}.
\end{proof}

\begin{remark}\label{remarkF}
The weight $\omega$ in Corollary \ref{Type3Corollary} is unbounded on $\mathbb{D}$. But if we take $F(\zeta)=\frac{-2i}{(i+\zeta)^2}\left(\frac{-2\zeta}{i+\zeta}\right)^{2/3}$ then $\omega(z)=|z-1|^{4/3}$ is a bounded function on $\mathbb{D}$ and the conclusion of Corollary \ref{Type3Corollary} holds for this choice, too. The proof works the same way. See Appendix A for details.\\
\end{remark}

\section{Domains with Irregular Bergman Projections}

In this section, we lift up the results of the previous chapters to $\mathbb{C}^2.$ For a given weight $\mu$ on $\mathbb{D}$ we define the following domain in $\mathbb{C}^2$:
\begin{equation}\label{C2domain}
\Omega=\{(z,w)\in \mathbb{C}^2~|~ z\in\mathbb{D},~  |w|^2<\mu(z)\}.
\end{equation}
Let $\mathbf{B}_{\Omega}$ be the ordinary Bergman projection of $\Omega$:
\begin{align*}
\mathbf{B}_{\Omega}&:~L^2(\Omega)\to L^2_a(\Omega),\\
\left(\mathbf{B}_{\Omega}F\right)(z,w)&=\int_{\Omega}B_{\Omega}\left[(z,w),(t,s)\right]F(t,s)dV(t,s).
\end{align*}

\begin{proposition}\label{inflation}
We have the following relation  between the kernels 
\begin{equation}
B_{\Omega}\left[(z,w),(t,s)\right]=\frac{1}{2\pi}\sum_{m=0}^{\infty}(2m+2)w^mK_m(z,t)\overline{s}^m
\end{equation}
where $K_m(z,t)$ is the weighted Bergman kernel for the weight $\mu^{m+1}$ on $\mathbb{D}$. 
\end{proposition}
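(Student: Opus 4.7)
The plan is to follow the classical Forelli--Rudin inflation argument: produce an orthonormal basis for $L^2_a(\Omega)$ out of orthonormal bases for the weighted spaces $L^2_a(\mathbb{D},\mu^{m+1})$, and then sum the basis to read off the kernel.

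First I would exploit that $\Omega$ is a Hartogs domain over $\mathbb{D}$ with circularly symmetric fiber. Any $F\in L^2_a(\Omega)$ expands as a Hartogs series $F(z,w)=\sum_{m\geq 0} f_m(z)w^m$ with each $f_m$ holomorphic on $\mathbb{D}$; convergence is uniform on compacta away from the boundary of the fibers. Introducing polar coordinates $w=re^{i\theta}$ with $0\le r<\sqrt{\mu(z)}$ and using orthogonality of the $e^{im\theta}$, I would compute
\begin{equation*}
\|F\|_{L^2(\Omega)}^2 = 2\pi\sum_{m=0}^{\infty}\int_{\mathbb{D}}|f_m(z)|^2\int_0^{\sqrt{\mu(z)}}r^{2m+1}\,dr\,dA(z)=\sum_{m=0}^{\infty}\frac{2\pi}{2m+2}\|f_m\|_{2,\mu^{m+1}}^2.
\end{equation*}
This shows $F\in L^2_a(\Omega)$ if and only if every $f_m$ lies in $L^2_a(\mathbb{D},\mu^{m+1})$ and the weighted norms sum appropriately; in particular $L^2_a(\Omega)$ splits as an orthogonal Hilbert sum of the subspaces $\{f_m(z)w^m\}$.

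Next I would fix, for each $m\geq 0$, an orthonormal basis $\{e_{m,k}\}_k$ of $L^2_a(\mathbb{D},\mu^{m+1})$ (admissibility of $\mu^{m+1}$ follows from admissibility of $\mu$, so these kernels $K_m$ exist). The functions
\begin{equation*}
\Phi_{m,k}(z,w):=\sqrt{\tfrac{2m+2}{2\pi}}\,e_{m,k}(z)\,w^m
\end{equation*}
are orthonormal in $L^2_a(\Omega)$ by the norm identity above (the normalization is engineered to cancel the factor $\tfrac{2\pi}{2m+2}$), and they are complete by the Hartogs decomposition. Summing the reproducing series yields
\begin{equation*}
B_\Omega[(z,w),(t,s)] = \sum_{m,k}\Phi_{m,k}(z,w)\overline{\Phi_{m,k}(t,s)} = \frac{1}{2\pi}\sum_{m=0}^{\infty}(2m+2)\,w^m\Bigl(\sum_k e_{m,k}(z)\overline{e_{m,k}(t)}\Bigr)\overline{s}^m,
\end{equation*}
and the inner sum is precisely $K_m(z,t)$, giving the claimed formula.

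The one genuinely delicate point is justifying convergence: interchanging the sum over $m$ with the integrals defining $\|F\|^2$ requires monotone convergence (which is fine since all integrands are nonnegative), while the Hartogs expansion must be shown to converge in $L^2(\Omega)$ norm, not merely locally uniformly. I would handle this by first truncating $F$ to $|w|<\sqrt{(1-\epsilon)\mu(z)}$, where the power series converges absolutely and uniformly, applying the identity above on the truncated domain, and then letting $\epsilon\downarrow 0$. Summability of $\sum \tfrac{2\pi}{2m+2}\|f_m\|_{2,\mu^{m+1}}^2$ passes in the limit, which both establishes the norm identity in the claimed form and guarantees that partial sums of the basis expansion converge to $F$ in $L^2(\Omega)$.
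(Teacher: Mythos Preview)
Your argument is correct and is exactly the classical Forelli--Rudin inflation proof; the paper itself does not supply a proof but simply cites \cite{ForelliRudin}, \cite{Ligocka89}, and \cite{BoasFuStraube99}, where one finds precisely the orthogonal-decomposition-and-basis-summation you outline. The only small caveat is that ``admissibility of $\mu^{m+1}$ follows from admissibility of $\mu$'' is not automatic in full generality, but it holds for the continuous, interior-nonvanishing weights treated in the paper.
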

\begin{proof}
See \cite{ForelliRudin}, \cite{Ligocka89} or \cite{BoasFuStraube99}. This relation is sometimes called the Forelli-Rudin formula or inflation principle.
\end{proof}
In particular, $\pi B_{\Omega}\left[(z,0),(t,0)\right]=K_0(z,t)=B_{\mu}(z,t)$ in our earlier notation.\\ 

This relation between the kernels can be used to relate the $L^p$ mapping properties of the projections. 

\begin{proposition}\label{firsthalf}
For a given $p\in(1,\infty)$, suppose that $\Bmu$ is unbounded on $L^p(\mathbb{D},\mu)$ then $\mathbf{B}_{\Omega}$ is also unbounded on $L^p(\Omega).$
\end{proposition}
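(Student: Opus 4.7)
The plan is to lift a test function on $(\mathbb{D},\mu)$ witnessing unboundedness of $\Bmu$ to one on $\Omega$ witnessing unboundedness of $\BOm$, using the fiber projection $(t,s)\mapsto t$. Concretely, given a bounded $f:\mathbb{D}\to\mathbb{C}$, define $F(t,s):=f(t)$. Since $\Omega$ is bounded, $F\in L^2(\Omega)\cap L^p(\Omega)$, so $\BOm F$ is defined.

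The key step is to evaluate $\BOm F$ using the Forelli--Rudin formula of Proposition \ref{inflation}. Writing $dV(t,s)=dA(t)\,dA(s)$ and switching to polar coordinates in the $s$-variable, the inner integral $\int_{|s|^2<\mu(t)}\overline{s}^{\,m}\,dA(s)$ vanishes for every $m\geq 1$ and equals $\pi\mu(t)$ for $m=0$. Combined with the identification $K_0=B_\mu$, this collapses the entire series to its lowest term and yields
\begin{equation*}
\BOm F(z,w)=\int_{\mathbb{D}}B_\mu(z,t)f(t)\mu(t)\,dA(t)=\Bmu f(z),
\end{equation*}
a function independent of $w$. Integrating $|F|^p$ and $|\BOm F|^p$ first over the fiber discs $\{|w|^2<\mu(z)\}$ then gives
\begin{equation*}
\|F\|_{L^p(\Omega)}^p=\pi\|f\|_{p,\mu}^p,\qquad \|\BOm F\|_{L^p(\Omega)}^p=\pi\|\Bmu f\|_{p,\mu}^p.
\end{equation*}

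Because $\Bmu$ is by hypothesis unbounded on $L^p(\mathbb{D},\mu)$ and bounded functions are dense there, I would choose a sequence $\{f_n\}$ of bounded functions for which $\|\Bmu f_n\|_{p,\mu}/\|f_n\|_{p,\mu}\to\infty$. The corresponding $F_n(t,s):=f_n(t)$ then satisfy $\|\BOm F_n\|_{L^p(\Omega)}/\|F_n\|_{L^p(\Omega)}\to\infty$, proving unboundedness of $\BOm$ on $L^p(\Omega)$.

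The substantive point is observing that the $s$-integration annihilates every $m\geq 1$ term in the Forelli--Rudin series; once this is noticed the remainder is bookkeeping. A minor technicality is the interchange of sum and integral in the kernel expansion, which can be handled by first taking $f$ supported in a compact subset of $\mathbb{D}$ (so that the kernel series converges uniformly on the support of $F$) and then approximating in $L^p$.
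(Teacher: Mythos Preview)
Your proof is correct and follows essentially the same approach as the paper: lift $f_n$ to $F_n(z,w)=f_n(z)$ and compare norms. The one difference is that the paper only evaluates $\BOm F_n$ at $w=0$ (via the mean value property in $s$) and then invokes the sub-mean value inequality for $|\BOm F_n(z,\cdot)|^p$ to get $\|\BOm F_n\|_{p,\Omega}^p\gtrsim\|\Bmu f_n\|_{p,\mu}^p$, whereas you use the Forelli--Rudin series and orthogonality of $\bar s^m$ to show $\BOm F$ is actually independent of $w$, yielding the exact equality $\|\BOm F\|_{p,\Omega}^p=\pi\|\Bmu f\|_{p,\mu}^p$; your version is slightly sharper, the paper's slightly more robust (it needs only holomorphicity of the kernel, not the explicit series).
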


\begin{proof}
Unboundedness of $\Bmu$ on $L^p(\mathbb{D},\mu)$ implies that there exists a sequence of functions $\{f_n(z)\}$ in $L^p(\mu)$ such that the ratio $$\frac{||\Bmu f_n||^p_{p,\mu}}{||f_n||^p_{p,\mu}}$$
is unbounded. Define $F_n(z,w)=f_n(z)$. Clearly
$$f_n\in L^p(\mu)~\implies~F_n\in L^p({\Omega}) ~\text{ and }~ ||f_n||^p_{p,\mu}=\pi||F_n||^p_{p,\Omega}.$$
The projections of $F_n$ and $f_n$ are related as
\begin{align*}
\mathbf{B}_{\Omega}F_n(z,0)&=\int_{\Omega}B_{\Omega}\left[(z,0),(t,s)\right]F_n(t,s)dV(t,s)\\
&=\int_{\Omega}B_{\Omega}\left[(z,0),(t,s)\right]f_n(t)dV(t,s)\\
&=\int_{\mathbb{D}}f_n(t)\int_{|s|^2<\mu(t)}B_{\Omega}\left[(z,0),(t,s)\right]dA(s)dA(t)\\
&=\int_{\mathbb{D}}f_n(t)cB_{\Omega}\left[(z,0),(t,0)\right]dA(t)\\
&=\int_{\mathbb{D}}f_n(t)cB_{\mu}(z,t)dA(t)\\
&=c\mathbf{B}_{\mu}f_n(z).
\end{align*}
Here we use the fact that $B_{\Omega}\left[(z,0),(t,s)\right]$ is anti-holomorphic in $s$ therefore the mean value property holds in $s$. In order to compare the $L^p$ norms of the projections we argue as follows

\begin{align*}
||\mathbf{B}_{\Omega}F_n||_{p,\Omega}^p&=\int_{\Omega}|\mathbf{B}_{\Omega}F_n(z,w)|^pdV(z,w)\\
&=\int_{\mathbb{D}}\int_{|w|^2<\mu(z)}|\mathbf{B}_{\Omega}F_n(z,w)|^pdA(w)dA(z)\\
&\geq \int_{\mathbb{D}}|\mathbf{B}_{\Omega}F_n(z,0)|^p\mu(z)dA(z)~\text{ by the sub-mean value property}\\
&=c \int_{\mathbb{D}}|\mathbf{B}_{\mu}f_n(z)|^p\mu(z)dA(z)~\text{ by the identity above}\\
&=c ||\mathbf{B}_{\mu}f_n||^p_{p,\mu}.
\end{align*}
Therefore, the ratio $$\frac{||\mathbf{B}_{\Omega}F_n||^p_{p,\Omega}}{||F_n||^p_{p,\Omega}}$$
is unbounded, too. This finishes the proof.\\
\end{proof}

We do not know if the converse of this theorem is true in general. Although, constructing a \textit{bad sequence} of functions on $\Omega$ from the one on $(\mathbb{D},\mu)$ works fine, we do not know how to control all the projections on $\Omega$ by just the projections on $(\mathbb{D},\mu)$. Nevertheless, again there is a certain class of $\mu$ for which we can prove the converse.

\begin{proposition}\label{secondhalf}
Let $g$ be a holomorphic function on $\mathbb{D}$ which does not vanish inside $\mathbb{D}$ and let $\omega=|g|^2$ be the weight. Suppose $\Bom$ is bounded on $L^{p}(\omega)$ for some $p\in(1,\infty)$ then $\mathbf{B}_{\Omega}$ is also bounded on $L^p(\Omega)$,
where $\Omega=\{(z,w)\in \mathbb{C}^2~|~ z\in\mathbb{D},~  |w|^2<\omega(z)\}$.
\end{proposition}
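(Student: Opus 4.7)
The plan is to identify $\Omega$ biholomorphically with the bidisc $\mathbb{D}^2$ and then exploit the tensor-product structure of the bidisc Bergman kernel. Because $g$ is nonvanishing on $\mathbb{D}$, the map $\Psi:\mathbb{D}^2\to\Omega$ defined by $\Psi(z,u)=(z,g(z)u)$ is a biholomorphism with complex Jacobian $g(z)$. One can verify this directly, or extract it from Proposition \ref{inflation}: inserting $\mu^{m+1}=|g^{m+1}|^2$ into the inflation formula and applying the kernel identity \eqref{kernelrelation} to $g^{m+1}$ turns the sum into a geometric series that collapses to a closed form, which is exactly the pullback of the bidisc kernel under $\Psi$.

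Next, I would invoke the standard transformation law of Bergman projections under biholomorphisms,
\begin{equation*}
(\mathbf{B}_\Omega F)(\Psi(z,u))\cdot g(z) \;=\; \mathbf{B}_{\mathbb{D}^2}\bigl((F\circ\Psi)\,g\bigr)(z,u).
\end{equation*}
Combined with the Jacobian identity $dV_\Omega\circ\Psi=|g(z)|^2\,dA(z)\,dA(u)$, this shows that the correspondence $F\leftrightarrow (F\circ\Psi)\,g$ is an isometric bijection between $L^p(\Omega)$ and the weighted bidisc space $L^p(\mathbb{D}^2,|g(z)|^{2-p}\,dA(z)\,dA(u))$ that intertwines $\mathbf{B}_\Omega$ with $\mathbf{B}_{\mathbb{D}^2}$. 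Consequently it suffices to prove that $\mathbf{B}_{\mathbb{D}^2}$ is bounded on this weighted bidisc space.

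For the reduced problem I would use the factorization $B_{\mathbb{D}^2}((z,u),(t,v))=B_1(z,t)\,B_1(u,v)$. By Fubini, the action of $\mathbf{B}_{\mathbb{D}^2}$ decouples into two applications of $\mathbf{B}_1$, one in $u$ and one in $z$. The $u$-integration sees no weight and is controlled by the standard $L^p(\mathbb{D})$ boundedness of the ordinary disc projection for every $p\in(1,\infty)$; the $z$-integration is the action of $\mathbf{B}_1$ on $L^p(\mathbb{D},|g|^{2-p})$, whose boundedness is condition $(2)$ of Theorem \ref{Type3} and is therefore equivalent to the hypothesis that $\mathbf{B}_\omega$ is bounded on $L^p(\omega)$.

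The main obstacle is really bookkeeping rather than analysis: one must track how the Jacobian factor $|g|^2$ combines with the $|g|^p$ introduced by the pullback to produce precisely the weight $|g|^{2-p}$ needed to invoke Theorem \ref{Type3}, and one must verify that the iterated bidisc argument yields a single constant independent of the slicing. Once that is done, no new analytic input beyond the Forelli--Rudin formula \eqref{inflation}, the kernel identity \eqref{kernelrelation}, and the equivalence of $(1)$ and $(2)$ in Theorem \ref{Type3} is required.
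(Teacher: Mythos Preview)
Your proposal is correct and is essentially the paper's own argument: the paper also collapses the Forelli--Rudin sum via \eqref{kernelrelation} to get $B_\Omega[(z,w),(t,s)]=B_\omega(z,t)\,B_1\bigl(w/g(z),s/g(t)\bigr)$ and then carries out exactly your changes of variable $u=w/g(z)$, $\sigma=s/g(t)$ (which is just $\Psi^{-1}$) to reduce $\|\mathbf{B}_\Omega F\|_{p,\Omega}$ to two one-variable projection estimates. The only cosmetic difference is that the paper applies the hypothesis on $\mathbf{B}_\omega$ directly in the $z$-slice, whereas you first pass through the equivalent condition $(2)$ of Theorem \ref{Type3}; either choice works.
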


\begin{proof}
Recall that $K_m(z,t)$ is the weighted Bergman kernel for the weight $|g(z)|^{2(m+1)}$ so we can apply observation \eqref{kernelrelation} to the kernels $K_m(z,t)$

\begin{align*}
K_m(z,t)&=\frac{1}{g(z)^{m+1}}B_1(z,t)\frac{1}{\overline{g(t)^{m+1}}}\\
&=\frac{1}{g(z)^{m}}B_{\omega}(z,t)\frac{1}{\overline{g(t)^{m}}}.
\end{align*}
Hence, we get
\begin{align*}
B_{\Omega}\left[(z,w),(t,s)\right]&=B_{\omega}(z,t)\sum_{m=0}^{\infty}(2m+2)\left(\frac{w\overline{s}}{g(z)\overline{g(t)}}\right)^m\\
&=B_{\omega}(z,t)B_1\left(\frac{w}{g(z)},\frac{s}{g(t)}\right).\\
\end{align*}

We recognize the sum as the representation of the ordinary Bergman kernel on $\mathbb{D}$ (up to a constant). Therefore, we can express $\BOm$ as a combination of operators involving $\Bom$ and $\mathbf{B}_{1}$.

Indeed, by the integral representation of $\BOm$ and the identity for $B_{\Omega}$ above,

\begin{align*}
\mathbf{B}_{\Omega}F(z,w)&=\int_{\Omega}B_{\Omega}\left[(z,w),(t,s)\right]F(t,s)dV(t,s)\\
&=\int_{\Omega}B_{\omega}(z,t)      B_1\left(\frac{w}{g(z)},\frac{s}{g(t)}\right)     F(t,s)dV(t,s)\\
&=\int_{\mathbb{D}}B_{\omega}(z,t)\int_{|s|^2<|g(t)|^2}B_1\left(\frac{w}{g(z)},\frac{s}{g(t)}\right) F(t,s)dA(s)dA(t)\\
&=\int_{\mathbb{D}}B_{\omega}(z,t)\int_{\mathbb{D}}B_1\left(\frac{w}{g(z)},\sigma\right)     F(t,g(t)\sigma)|g(t)|^2dA(\sigma)dA(t)
\end{align*}
where we make the change of variable $\sigma=\frac{s}{g(t)}$. Next, we compute $||\mathbf{B}_{\Omega}F||^p_{p,\Omega}$ by using this identity and writing the integral on $\Omega$ as an iterated integral and making the change of variable $u=\frac{w}{g(z)}:$
\begin{align*}
&||\mathbf{B}_{\Omega}F||^p_{p,\Omega}=\int_{\Omega}|\mathbf{B}_{\Omega}F(z,w)|^pdV(z,w)\\
&=\int_{\mathbb{D}}\int_{|w|^2<|g(z)|^2}|\mathbf{B}_{\Omega}F(z,w)|^pdA(w)dA(z)\\
&=\int_{\mathbb{D}}\int_{|w|^2<|g(z)|^2}\left|\int_{\mathbb{D}}B_{\omega}(z,t)\int_{\mathbb{D}}B_1(\frac{w}{g(z)},\sigma)F(t,g(t)\sigma)|g(t)|^2dA(\sigma)dA(t)\right|^pdA(w)dA(z)\\
&=\int_{\mathbb{D}}\int_{\mathbb{D}}\left|\int_{\mathbb{D}}B_{\omega}(z,t)\int_{\mathbb{D}}B_1(u,\sigma)F(t,g(t)\sigma)|g(t)|^2dA(\sigma)dA(t)\right|^pdA(u)|g(z)|^2dA(z).\\
\end{align*}

We change the order of integration and integrate with respect to $z$ first. Also, we notice that the expression in braces below is the weighted $p-$norm of a projected function. Furthermore, $\Bom$ is $L^p-$bounded (by the hypothesis), so we get
\begin{align*}
&||\mathbf{B}_{\Omega}F||^p_{p,\Omega}=\\&=\int_{\mathbb{D}}\left\{\int_{\mathbb{D}}\left|\int_{\mathbb{D}}B_{\omega}(z,t)\left[\int_{\mathbb{D}}B_1(u,\sigma)F(t,g(t)\sigma)dA(\sigma)\right]|g(t)|^2dA(t)\right|^p|g(z)|^2dA(z)\right\}dA(u)\\
&\lesssim\int_{\mathbb{D}}\int_{\mathbb{D}}\left|\int_{\mathbb{D}}B_1(u,\sigma)F(t,g(t)\sigma)dA(\sigma)\right|^p|g(t)|^2dA(t)dA(u).\\
\end{align*}

{\allowdisplaybreaks
Once again, we change order of integration and integrate with respect to $u$ first. We notice the same thing above for $\mathbf{B}_1$ now i.e. the expression in braces is the weighted $p-$norm of a projected function and $\mathbf{B}_1$ is $L^p-$bounded, so we get

\begin{align*}
||\mathbf{B}_{\Omega}F||^p_{p,\Omega}&\lesssim\int_{\mathbb{D}}\int_{\mathbb{D}}\left|\int_{\mathbb{D}}B_1(u,\sigma)F(t,g(t)\sigma)dA(\sigma)\right|^p|g(t)|^2dA(t)dA(u)\\
&\lesssim\int_{\mathbb{D}}\int_{\mathbb{D}}\left|F(t,g(t)\sigma)\right|^pdA(\sigma)|g(t)|^2dA(t).\\
&=\int_{\mathbb{D}}\int_{|s|^2<|g(t)|^2}\left|F(t,s)\right|^pdA(s)dA(t)\\
&=\int_{\Omega}|F(t,s)|^pdV(t,s)\\
&=||F||^p_{p,\Omega}.
\end{align*}}
Therefore, we finally get
$$||\mathbf{B}_{\Omega}F||^p_{p,\Omega}\lesssim||F||^p_{p,\Omega}.$$
 
Note that to justify the changes of order of integrations, we can start with a polynomial $F$ and use the fact that polynomials in $(z,\bar{z},w,\bar{w})$ are dense in $L^p(\Omega)$.\\

\end{proof}
\noindent When we combine the last two theorems we get the following corollary.
\begin{corollary}\label{equivalence}
Let $g$ be a holomorphic function on $\mathbb{D}$ which does not vanish inside $\mathbb{D}$ and let $\omega=|g|^2$ be the weight. Then $\Bom$ is bounded on $L^{p}(\omega)$ for some $p\in(1,\infty)$ if and only if $\mathbf{B}_{\Omega}$ is bounded on $L^p(\Omega).$\\
\end{corollary}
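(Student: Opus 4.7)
The corollary is designed to package together the two preceding propositions, so the proof should essentially amount to checking that the hypotheses of each applies and combining them with a contrapositive.

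The plan is to argue the two implications separately. For the $(\Rightarrow)$ direction, I would simply invoke Proposition \ref{secondhalf}: the hypothesis is that $\omega = |g|^2$ with $g$ holomorphic and non-vanishing on $\mathbb{D}$, which is exactly the setting of that proposition, and the conclusion is exactly that $\mathbf{B}_\Omega$ is bounded on $L^p(\Omega)$. Nothing more needs to be said.

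For the $(\Leftarrow)$ direction, I would take the contrapositive of Proposition \ref{firsthalf} with $\mu$ specialized to $\omega = |g|^2$. Note that this specialization is legitimate: because $g$ is non-vanishing and holomorphic on $\mathbb{D}$, the weight $\omega$ is continuous and strictly positive inside $\mathbb{D}$, hence admissible, so the construction of $\Omega = \{(z,w) \in \mathbb{C}^2 : z \in \mathbb{D},\ |w|^2 < \omega(z)\}$ and the weighted Bergman projection $\Bom$ both make sense. Proposition \ref{firsthalf} then says that unboundedness of $\Bom$ on $L^p(\mathbb{D},\omega)$ forces unboundedness of $\mathbf{B}_\Omega$ on $L^p(\Omega)$; equivalently, boundedness of $\mathbf{B}_\Omega$ on $L^p(\Omega)$ forces boundedness of $\Bom$ on $L^p(\omega)$.

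There is essentially no obstacle here since both halves of the equivalence have been established in the body of the section; the only point that warrants a brief mention is that Proposition \ref{firsthalf} is stated for an arbitrary admissible weight $\mu$, so one must observe that the special form $\mu = |g|^2$ is admissible and hence falls under its scope. Combining the two implications yields the biconditional claimed in Corollary \ref{equivalence}.
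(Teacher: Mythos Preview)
Your proposal is correct and matches the paper's approach exactly: the paper simply states that combining Proposition~\ref{firsthalf} (via contrapositive) and Proposition~\ref{secondhalf} yields the corollary, and that is precisely what you do. Your added remark about admissibility of $\omega=|g|^2$ is a harmless clarification not present in the paper.
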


This corollary with the weights constructed in Corollary \ref{Type3Corollary} and in Remark \ref{remarkF} establishes the proof of Theorem \ref{Three}. In particular, for any $p_0>2$ if
\begin{equation*}
\Omega_{p_0}=\left\{(z,w)\in \mathbb{C}^2~|~ z\in\mathbb{D},~  |w|^2<|z-1|^{\frac{4}{p_0-2}}\right\}.
\end{equation*}
Then the ordinary Bergman projection of $\Omega_{p_0}$ is bounded on $L^p(\Omega_{p_0})$ if and only if $p\in (q_0,p_0)$.\\

Theorem \ref{firsthalf} combined with the examples of weights in Theorem \ref{Type2mainIntroduction} proves Theorem \ref{Two}. In particular, for any $A\geq 0, B>0, \alpha>0$ if
\begin{equation*}
\Omega_{A,B,\alpha}=\left\{(z,w)\in \mathbb{C}^2~|~ z\in\mathbb{D},~  |w|^2<(1-|z|^2)^A\exp\left(\frac{-B}{(1-|z|^2)^{\alpha}}\right)\right\}.
\end{equation*}
Then the ordinary Bergman projection of $\Omega_{A,B,\alpha}$ is bounded on $L^p(\Omega_{A,B,\alpha})$ if and only if $p=2$.\\

\newpage
\appendix
\section{Details of Remark \ref{remarkF}}

Let $F(\zeta)=\frac{-2i}{(i+\zeta)^2}\left(\frac{-2\zeta}{i+\zeta}\right)^{\frac{2}{p_0-2}}$ for some $p_0>2$. $F$ is a non-vanishing holomorphic function on $\uplane$. For this choice of $F$, we get $\omega=|(F\circ\psi(z))\psi'(z)|^2=|z-1|^{\frac{4}{p_0-2}}$. 

By Corollary \ref{F}, $\Bom$ is bounded on $L^p(\mathbb{D}, \omega)$ if and only if $|F|^{2-p}\in A^+_p(\mathbb{R}^2_+).$ Our goal in this appendix is to show that, indeed
\begin{align*}
|F(\zeta)|^{2-p}&=\left(\frac{2}{|i+\zeta|^2}\left(\frac{|2\zeta|}{|i+\zeta|}\right)^{\frac{2}{p_0-2}}\right)^{2-p}\\ &\sim |\zeta|^{\frac{4-2p}{p_0-2}}|i+\zeta|^{\frac{(2p-4)(p_0-1)}{p_0-2}}\\ 
&\in A^+_p(\mathbb{R}^2_+)
\end{align*}
only for $p\in (q_0, p_0)$.

By Definition \ref{def}, this is equivalent to show that there exists $C=C(p)>0$ such that
\begin{align*}
\frac{1}{|D\cap\mathbb{R}^2_+|^p}&\left(\int_{D\cap\mathbb{R}^2_+}|\zeta|^{\frac{4-2p}{p_0-2}}|i+\zeta|^{\frac{(2p-4)(p_0-1)}{p_0-2}}dA(\zeta)\right)\\
&\left(\int_{D\cap\mathbb{R}^2_+}|\zeta|^{\frac{4-2p}{(p_0-2)(1-p)}}|i+\zeta|^{\frac{(2p-4)(p_0-1)}{(p_0-2)(1-p)}}
dA(\zeta)\right)^{p-1} \leq C
\end{align*}
for any disc $D=D(x_0,R)$, where $x_0\in\mathbb{R}$ and $R>0$, if and only if $p\in (q_0, p_0)$. For convenience, we label the first integral $I_1$ and the second one $I_2$.\\

We start with $p\geq p_0$. In this case, $\frac{4-2p}{p_0-2}=-2-\epsilon$ for some $\epsilon>0$ and therefore $$\int_{D\cap\mathbb{R}^2_+}|\zeta|^{\frac{4-2p}{p_0-2}}|i+\zeta|^{\frac{(2p-4)(p_0-1)}{p_0-2}}dA(\zeta)=\infty$$
for discs $D$ centered at $\zeta=0$. Thus, $|F(\zeta)|^{2-p} \not \in A^+_p(\mathbb{R}^2_+)$ for $p\geq p_0$ and $\Bom$ is unbounded on $L^p(\mathbb{D}, \omega)$ for $p\geq p_0$.\\

Next, we consider $2\leq p <p_0$. When $p=2$ the estimate above holds trivially since $4-2p=0$.

Given any $D=D(x_0,R)$, we split up to the following cases.\\

\noindent \textit{Case 1:} $D\cap D(0,2R)$ is not empty and $R<2$. In this case, $D\subset D(0,4R)$ and

\begin{align*}
I_1&\leq \int_{D(0,4R)}|\zeta|^{\frac{4-2p}{p_0-2}}|i+\zeta|^{\frac{(2p-4)(p_0-1)}{p_0-2}}dA(\zeta)\\
&\leq M_R^{\frac{(2p-4)(p_0-1)}{p_0-2}}\int_{D(0,4R)}|\zeta|^{\frac{4-2p}{p_0-2}}dA(\zeta)\\
&\leq M_R^{\frac{(2p-4)(p_0-1)}{p_0-2}}\frac{R^{2+\frac{4-2p}{p_0-2}}}{2+\frac{4-2p}{p_0-2}}\\
&\lesssim M_R^{\frac{(2p-4)(p_0-1)}{p_0-2}}R^{\frac{2p_0-2p}{p_0-2}}
\end{align*}
where $M_R=\max_{D(0,4R)}|i+\zeta|$.

Also, we have

\begin{align*}
I_2&\leq \int_{D(0,4R)}|\zeta|^{\frac{4-2p}{(p_0-2)(1-p)}}|i+\zeta|^{\frac{(2p-4)(p_0-1)}{(p_0-2)(1-p)}}
dA(\zeta)\\
&\leq m_R^{\frac{(2p-4)(p_0-1)}{(p_0-2)(1-p)}}\int_{D(0,4R)}|\zeta|^{\frac{4-2p}{(p_0-2)(1-p)}}dA(\zeta)\\
&\leq m_R^{\frac{(2p-4)(p_0-1)}{(p_0-2)(1-p)}} \frac{R^{2+\frac{4-2p}{(p_0-2)(1-p)}}}{2+\frac{4-2p}{(p_0-2)(1-p)}}\\
&\lesssim m_R^{\frac{(2p-4)(p_0-1)}{(p_0-2)(1-p)}} R^{2+\frac{4-2p}{(p_0-2)(1-p)}}
\end{align*}
where $m_R=\min_{D(0,4R)}|i+\zeta|$.

Hence, we get

\begin{align*}
I_1(I_2)^{p-1}&\lesssim M_R^{\frac{(2p-4)(p_0-1)}{p_0-2}}R^{\frac{2p_0-2p}{p_0-2}}m_R^{\frac{-(2p-4)(p_0-1)}{p_0-2}} R^{2p-2-\frac{4-2p}{p_0-2}}\\
&=\left(\frac{M_R}{m_R}\right)^{\frac{(2p-4)(p_0-1)}{p_0-2}} R^{2p}
\end{align*}
and finally we get
$$\frac{1}{|D|^p}I_1(I_2)^{p-1}\leq C_p\left(\frac{M_R}{m_R}\right)^{\frac{(2p-4)(p_0-1)}{p_0-2}}.$$

For $R<2$, the quantities $M_R$ and $m_R$ are comparable so we get something finite on the right hand side.\\

\noindent \textit{Case 2:} $D\cap D(0,2R)$ is not empty and $R\geq2$. In this case, $D\subset D(0,4R)$ and $D\subset D(-i,5R)$. We use the H\"older's inequality to get 

\begin{align*}
I_1&\leq \left( \int_{D(0,4R)} |\zeta|^{t\frac{4-2p}{p_0-2}}dA(\zeta)\right)^{\frac{1}{t}} \left( \int_{D(-i,5R)}|i+\zeta|^{\frac{t}{t-1}\frac{(4p-8)(p_0-1)}{p_0-2}}dA(\zeta)\right)^{\frac{t-1}{t}}\\
&\lesssim \left(R^{2+t\frac{4-2p}{p_0-2}}\right)^{\frac{1}{t}}\left( R^{2+\frac{t}{t-1}\frac{(4p-8)(p_0-1)}{p_0-2}}\right)^{\frac{t-1}{t}}\\
&=R^{2+\frac{4-2p}{p_0-2}+\frac{(4p-8)(p_0-1)}{p_0-2}}.
\end{align*}
for some $t>1$ such that the first integral above is finite.

On the other hand, again by H\"older's inequality we get

\begin{align*}
I_2&\leq \left( \int_{D(0,4R)}|\zeta|^{t\frac{4-2p}{(p_0-2)(1-p)}}dA(\zeta)\right)^{\frac{1}{t}}\left(\int_{D(-i,
 5R)}|i+\zeta|^{\frac{t}{t-1}\frac{(2p-4)(p_0-1)}{(p_0-2)(1-p)}}
dA(\zeta)\right)^{\frac{t-1}{t}}\\
&\lesssim \left( R^{2+t\frac{4-2p}{(p_0-2)(1-p)}}\right)^{\frac{1}{t}}\left(R^{2+\frac{t}{t-1}\frac{(2p-4)(p_0-1)}{(p_0-2)(1-p)}}\right)^{\frac{t-1}{t}}\\
&=R^{2+\frac{4-2p}{(p_0-2)(1-p)}+\frac{(2p-4)(p_0-1)}{(p_0-2)(1-p)}}.
\end{align*}
for some $t>1$ such that the first integral above is finite.

Combining these two estimates, we obtain $I_1(I_2)^{p-1} \lesssim R^{2p}$ and  $$\frac{1}{|D|^p}I_1(I_2)^{p-1}\leq C_p.$$\\

\noindent \textit{Case 3:} $D\cap D(0,2R)$ is empty. For this case, the crucial observation is the following. If 
\begin{align*}
N_R&= \max_{D}|\zeta| ~\text{ and }n_R=\min_{D}|\zeta|\\
K_R&= \max_{D}|i+\zeta| \text{ and }k_R=\min_{D}|i+\zeta|
\end{align*}
then for any $R>0$, the quantities $N_R$ and $n_R$ and the quantities $K_R$ and $k_R$ are comparable to each other. 
Therefore, we get
\begin{align*}
I_1\lesssim |D|n_R^{\frac{4-2p}{p_0-2}}K_R^{\frac{(4p-8)(p_0-1)}{p_0-2}}~\text{ and }~
I_2\lesssim |D|N_R^{\frac{4-2p}{(p_0-2)(1-p)}}k_R^{\frac{(2p-4)(p_0-1)}{(p_0-2)(1-p)}}.
\end{align*}
These give us
\begin{align*} 
I_1(I_2)^{p-1} \lesssim |D|^p\left(\frac{n_R}{N_R}\right)^{\frac{4-2p}{p_0-2}}\left(\frac{K_R}{k_R}\right)^{\frac{(4p-8)(p_0-1)}{p_0-2}}
~\text{ and }~
\frac{1}{|D|^p}I_1(I_2)^{p-1} &\leq C_p.\\
\end{align*}

Therefore, in all three cases $\frac{1}{|D|^p}I_1(I_2)^{p-1}$ is bounded and $|F(\zeta)|^{2-p} \in A^+_p(\mathbb{R}^2_+)$ for $2\leq p < p_0$ and $\Bom$ is bounded on $L^p(\mathbb{D}, \omega)$ for $2\leq p < p_0$.\\

Duality and the self adjointness of $\Bom$ concludes that for this choice of $F$ and $\omega$, the weighted Bergman projection $\Bom$ is bounded on $L^p(\mathbb{D}, \omega)$ if and only if $p \in (q_0,p_0)$ where $\frac{1}{p_0}+\frac{1}{q_0}=1$.\\

\vskip 1cm
\bibliographystyle{alpha}
\bibliography{ThesisBib}
\vskip 1cm
\end{document}